\newtheorem{theorem}{Theorem}
\newtheorem{Theorem}{Theorem}[section]
\newtheorem{Corollary}[Theorem]{Corollary}
\newtheorem{Proposition}[Theorem]{Proposition}
\newtheorem{Lemma}[Theorem]{Lemma}
\newtheorem{Claim}[Theorem]{Claim}
\theoremstyle{Definition}
\newtheorem{Definition}[Theorem]{Definition}
\newtheorem{Example}[Theorem]{Example}
\theoremstyle{Remark}
\newtheorem{Remark}[Theorem]{Remark}
\def\leaderfill{\leaders\hbox to .8em{\hss .\hss}\hfill}
\def\_#1{{\lower 0.7ex\hbox{}}_{#1}}
\def\C{{\mathcal{C}}}
\def\L{{\mathcal{L}}}
\def\G{{\mathcal{G}}}
\def\fa{{\mathcal{F}}}
\def\O{{\mathcal{O}}}
\def\M{{\mathcal{M}}}
\def\U{{\mathcal{U}}}
\def\po{{\partial}}
\def\Om{{\Omega}}
\def\vr{{\varphi}}
\def\ga{{\gamma}}
\def\Ga{{\Gamma}}
\def\la{{\lambda}}
\def\La{{\Lambda}}
\def\ov{\overline}
\def\al{{\alpha}}
\def\lg{{\langle}}
\def\rg{{\rangle}}
\def\bh{{\Bbb{H}}}
\def\re{{\Bbb{R}}}
\def\bz{{\Bbb{Z}}}
\def\bd{{\Bbb{D}}}
\def\bc{{\Bbb{C}}}
\def\bn{{\Bbb{N}}}
\def\be{{\beta}}
\def\SL{\operatorname{{SL}}}
\def\Res{\operatorname{{Res}}}
\def\Aut{\operatorname{{Aut}}}
\def\SL{\operatorname{{SL}}}
\def\Aff{\operatorname{{Aff}}}
\def\Hol{\operatorname{{Hol}}}
\def\Ker{\operatorname{{Ker}}}
\def\Diff{\operatorname{{Dif}}}
\def\sing{\operatorname{{sing}}}
\title[On  transversely holomorphic foliations]{On transversely holomorphic foliations with
homogeneous transverse structure}
\author{Liliana Jurado}
\author{Bruno Sc\'ardua}
\address{Liliana Jurado \\
IMCA - Inst. Mat. Ciencias Afines - Calle los Bi�logos 245, La Molina, Lima - Per\'u}
\email{lilianajurado1125@gmail.com}
\address{Bruno Sc\'ardua \\ Instituto de Matem\'atica - UFRJ, CP 68530. Rio de Janeiro - RJ, 21945-970 - Brazil}
\email{bruno.scardua@gmail.com}
\date{}
\begin{document}

\maketitle

\section{Introduction}
\label{Section:Introduction}

In this paper we  study transversely holomorphic foliations of
complex codimension one with some hypothesis on the transverse
structure. A real codimension two smooth foliation $\fa$ of a
differentiable manifold $M^{\ell+2}$ is {\it transversely
holomorphic\/} of (complex) codimension one if its holonomy
pseudogroup is given by biholomorphic maps between open subsets of
$\bc$. In \cite{Brunella} and \cite{Ghys} one finds the complete
classification of transversely holomorphic flows on closed
3-manifolds. This paper deals with the case $\ell \ge 2$, i.e., the
leaves of $\fa$ have real dimension $\ell \ge 2$. Due to the lack of
some ingredients that play a fundamental role in the case of flows,
as harmonic time parametrizations and classification of compact
complex surfaces, we make additional hypothesis on the foliation
$\fa$. The idea is to classify, at a first moment, the simplest
transversely holomorphic foliations of codimension one. From the
structural point of view the simplest foliations are those with an
homogeneous transverse structure compatible with the transversely
holomorphic structure. These will be called {\it $\mathbb
C$-transversely homogeneous foliations}. Examples are given by
foliations with $\mathbb C$-additive, affine or projective
transverse structure. We shall prove that these are the only cases
(cf. Proposition~\ref{Proposition:homogeneous}).

\begin{theorem}
\label{Theorem:sheaf}

Let $\fa$ be a transversely holomorphic foliation of codimension one on $M$
given  by a
transversely holomorphic integrable $1$-form $\Om$. Assume that $\fa$ is
$\mathbb C$-transversely affine on $M\backslash\La$ for a compact leaf $\Lambda \subset M$ such that:
\begin{enumerate}

\item $\La$ contains an attractor on its holonomy
group.

\item We have $H^1(\Lambda, \mathbb C^*)=0$ or,
more generally, $\La$ is  given by some equation $\La\colon \{f=0\}$
where $f\colon M \to \bc$ is transversely holomorphic.

\item  $H^1(M,\Om_{\fa}^{\O}) = 0$ where
$\Om_{\fa}^{\O}$ is the sheaf of closed transversely holomorphic
$1$-forms on $M$ that vanish on $T\fa$.
\end{enumerate}
\noindent Then the holonomy group of $\Lambda$ is abelian. In particular  $\fa$ is locally logarithmic, i.e., given by a closed transversely meromorphic
$1$-form $\xi$  with simple poles in a neighborhood of $\La$ in $M$.
\end{theorem}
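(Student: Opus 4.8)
The plan is to produce, on a neighborhood $V$ of $\La$, a closed transversely meromorphic $1$-form $\xi$ with a simple pole along $\La$ and $\ker\xi=T\fa$; the abelianity of the holonomy of $\La$ then comes for free. Indeed, if $\xi$ is closed and defines $\fa$, then in each flow box $\xi=g(z)\,dz$ with $g$ depending only on the transverse coordinate $z$, and the flow-box compatibility along a leaf shows that the holonomy $h$ preserves $\xi|_T=g(z)\,dz$. Normalising so that $\xi|_T=\frac{dz}{z}$ on a transversal $T\cong(\bc,0)$, the relation $h^*(dz/z)=dz/z$ forces $(\log h)'=1/z$, i.e. $h(z)=az$; hence the holonomy group sits inside the abelian group of homotheties (which contains the given attractor). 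So the whole problem reduces to the following: fixing a closed transversely holomorphic $\om$ on $M\setminus\La$ with $d\Om=\om\wedge\Om$ (the affine datum), integrate $\om$ to a single-valued transversely meromorphic $u$, vanishing to first order on $\La$, with $\om\equiv\frac{du}{u}\pmod{\Om}$; then $\xi:=\Om/u$ is the desired logarithmic form.

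I would first treat the local behaviour at $\La$ using the attractor. The contraction $h_0$ in the holonomy is a holomorphic germ with $0<|h_0'|<1$, hence linearizable by Koenigs' theorem; in the linearizing transversal the leaf through the attracting fixed point is $\{z=0\}\subset\La$ and $\fa$ is the radial (logarithmic) foliation, so $\om$ has a simple pole along $\La$ with integral linking-residue and a local primitive $\om\equiv\frac{du_\al}{u_\al}\pmod{\Om}$ with $u_\al$ transversely meromorphic and with a simple zero on $\La$. Using the defining function $f$ of hypothesis (2) — available because $H^1(\La,\bc^*)=0$ trivialises the transverse normal data of $\La$, or because $\La=\{f=0\}$ is assumed outright — I would normalise every local primitive as $u_\al=f\cdot(\text{unit})$, which fixes the linking-residue to $1$ and makes each $u_\al$ single-valued. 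Propagating the normal form along $\La$ by the holonomy and using connectedness yields such $u_\al$ on an open cover $\{V_\al\}$ of $V$.

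Next I would glue. On overlaps $u_\al=c_{\al\be}\,u_\be$ with $c_{\al\be}$ a nonvanishing transversely holomorphic function, and from $\frac{du_\al}{u_\al}\equiv\frac{du_\be}{u_\be}\pmod{\Om}$ one reads off that $\frac{dc_{\al\be}}{c_{\al\be}}$ is a closed transversely holomorphic $1$-form vanishing on $T\fa$, i.e. a section of $\Om_{\fa}^{\O}$, and $\{dc_{\al\be}/c_{\al\be}\}$ is a \v{C}ech $1$-cocycle with values in that sheaf. Hypothesis (3), $H^1(M,\Om_{\fa}^{\O})=0$, trivialises it: $c_{\al\be}=c_\al/c_\be$ for transversely holomorphic units $c_\al$, so $u:=u_\al/c_\al$ is a global single-valued transversely meromorphic function on $V$ with $\om\equiv\frac{du}{u}\pmod{\Om}$ and a simple zero along $\La$. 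Setting $\xi:=\Om/u$ gives a closed transversely meromorphic $1$-form with a simple pole on $\La$ defining $\fa$, so $\fa$ is locally logarithmic and, by the first paragraph, its holonomy along $\La$ is abelian.

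The main obstacle is the local step at $\La$: one must show that $\om$, a priori only defined and closed on $M\setminus\La$, extends across the compact leaf as a meromorphic form with a \emph{simple} pole and an \emph{integral} linking-residue, so that its logarithmic primitive is genuinely single-valued and meromorphic rather than multivalued. This is precisely where the attractor is indispensable — the contraction forces the linearizable radial model and rules out higher-order poles or essential singularities — and where hypothesis (2) is used to pin the residue; the subsequent globalisation via hypothesis (3) is then a formal cohomological patching.
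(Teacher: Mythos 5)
Your reduction in the first paragraph is correct, and the \v{C}ech patching via hypothesis (3) in your third paragraph is in the spirit of what the paper does. The genuine gap is the local step in your second paragraph, and it is not a technical detail: it is the actual content of the theorem. You assert that because the holonomy of $\La$ contains an attractor $h_0$, Koenigs linearization puts $\fa$ near $\La$ into the radial model, so that the affine connection form has a simple pole along $\La$ with residue normalizable to $1$ and single-valued logarithmic primitives $u_\al$ with a simple zero on $\La$. But Koenigs linearizes the single germ $h_0$, not the whole holonomy group; a priori the group is only solvable, and the solvable non-abelian groups $\bh_k=\{z\mapsto \la z/\sqrt[k]{1+\mu z^k}\}$ do contain hyperbolic attractors. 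In that case the closed form $\eta$ with $d\Om=\eta\wedge\Om$ extends across $\La$ with $\Res_\La\eta=k+1\ge 2$, and $\fa$ admits \emph{no} closed transversely meromorphic defining $1$-form near $\La$ at all (simple pole or otherwise) --- so the local primitives $u_\al$ you propose to glue do not exist. Your sentence ``the contraction forces the linearizable radial model and rules out higher-order poles'' is precisely the statement that needs proof, and the attractor alone does not give it.

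The paper excludes this alternative by a different use of hypotheses (2) and (3). First it proves the holonomy of $\La$ is solvable (a monodromy argument), builds the model form $\eta_L$ from the Cerveau--Moussu/Loray normal form of solvable groups, and extends $\eta$ meromorphically with simple poles across $\La$. Then, supposing the holonomy non-abelian, one has $\Res_\La\eta=k+1$ with $k\in\bn$; hypothesis (2) produces a global transversely holomorphic $f$ with $\La=\{f=0\}$, hypothesis (3) makes $\eta-(k+1)\,\frac{df}{f}=d\vr$ exact, and then $\Om/(f^{k+1}e^{\vr})$ is a \emph{closed} transversely meromorphic form defining $\fa$ --- with a pole of order $k+1$, not $1$. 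The mere existence of a closed defining form forces the holonomy to be abelian, a contradiction; only then does abelian plus attractor give linearizability and the simple-pole form $\xi|_D=\frac{dz}{z}$. So (2) and (3) are not bookkeeping devices for pinning a residue to $1$ and splitting a cocycle of units: they are what kills the $\bh_k$ alternative. Your argument needs this step inserted before the gluing can begin.
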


As an application of our techniques we prove:

\begin{theorem}
\label{Theorem:main}
 Let $\G$ be a codimension one holomorphic
foliation with singularities in a complex Stein manifold $X^n$ of
dimension $n \ge 3$. Let $A \subset X$ be a relatively compact
open subset with smooth simply-connected boundary $M = \po A$ transverse to $\G$. Suppose that the induced foliation $\fa =
\G|\_M$ is holomorphically transversely affine in $M\backslash
\La$ where $\La = \Ga \cap M$ and $\Ga \subset X$ is transversely
analytic and invariant by $\G$ {\rm(}e.g. $\Ga \subset X$ is
closed and invariant by $\G${\rm)}. Also assume that each leaf $L\subset \Lambda$ of
$\fa$  contains an attractor on its holonomy
group. Then $\G$ is logarithmic in a neighborhood of $\ov A$ in
$X$.
\end{theorem}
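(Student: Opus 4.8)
The plan is to reduce Theorem~\ref{Theorem:main} to Theorem~\ref{Theorem:sheaf} applied to the induced foliation $\fa=\G|\_M$ on the compact boundary $M=\po A$, and then to extend the resulting logarithmic structure from $M$ into a full neighborhood of $\ov A$ in $X$. First I would record the basic structure: since $M$ is transverse to $\G$ and $\G$ is holomorphic of codimension one, $\fa=\G|\_M$ is transversely holomorphic of codimension one on $M$, defined by the restriction $\Om=\Om_\G|\_M$ of a holomorphic $1$-form defining $\G$ near $M$. Because $\Ga$ is invariant by $\G$ and transverse to $M$, the trace $\La=\Ga\cap M$ is saturated by leaves of $\fa$; as $M$ is compact these are compact, so $\La$ is a finite union of compact leaves. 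By hypothesis $\fa$ is transversely affine on $M\backslash\La$ and each leaf of $\La$ carries an attractor in its holonomy, which is exactly hypothesis (1) of Theorem~\ref{Theorem:sheaf}.

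Next I would verify (2) and (3). For (2): since $X$ is Stein and $\Ga$ is an invariant analytic hypersurface, on a Stein neighborhood $U$ of $\ov A$ the exponential sequence (with $H^1(U,\O)=H^2(U,\O)=0$) identifies the Picard group $\operatorname{Pic}(U)$ with $H^2(U,\bz)$, so the line bundle $[\Ga]$ is holomorphically trivial once its Chern class $c_1([\Ga])$ vanishes; the topological input needed for this vanishing is the simple connectedness of $M=\po A$. Hence $\Ga=\{F=0\}$ for some $F\in\O(U)$, and $f=F|\_M$ is transversely holomorphic for $\fa$ (the transverse direction of $\fa$ inside $M$ is a holomorphic direction of $\G$) with $\La=\{f=0\}$, which is the second alternative in (2). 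For (3): I would exhibit on $M$ the short exact sequence
\[
0\to\bc\to\O_\fa\xrightarrow{\ d\ }\Om_{\fa}^{\O}\to 0,
\]
where $\O_\fa$ is the sheaf of transversely holomorphic functions, and read $H^1(M,\Om_{\fa}^{\O})$ off the associated long exact sequence. Simple connectedness gives $H^1(M,\bc)=0$, while the terms $H^1(M,\O_\fa)$ and $H^2(M,\bc)$ are controlled by combining the Steinness of $X$ (vanishing of $H^1$ of the ambient coherent sheaf restricting to $\O_\fa$ near $M$) with the transversality of $M$, yielding $H^1(M,\Om_{\fa}^{\O})=0$.

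With (1)--(3) in hand, Theorem~\ref{Theorem:sheaf} gives that the holonomy of $\La$ is abelian and that $\fa$ is defined near $\La$ in $M$ by a closed transversely meromorphic $1$-form $\xi$ with simple poles along $\La$. Gluing this with the transversely affine structure on $M\backslash\La$ — which by abelianness of the holonomy now integrates to a multiplicative (logarithmic) one — produces such a $\xi$ on all of $M$. The final step is the extension from the boundary into $X$: I would first push $\xi$ along the leaves of $\G$, legitimate since $M\pitchfork\G$ and a collar of $M$ in $X$ is a union of plaques meeting $M$, to obtain a closed meromorphic $1$-form with simple poles along $\Ga$ defining $\G$ on a two-sided neighborhood of $M$; I would then fill in the interior $A$ using that $X$ is Stein and $n\ge 3$, where a Hartogs/Levi-type extension across the Stein domain propagates the coefficients of the defining form inward while the polar set remains the given analytic hypersurface $\Ga\cap\ov A$. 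This yields a logarithmic defining form for $\G$ on a neighborhood of $\ov A$.

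The main obstacle is precisely this last extension: converting the \emph{transversely} meromorphic logarithmic datum living on $\po A$ into a genuine meromorphic logarithmic defining form for $\G$ on a neighborhood of $\ov A$. Making this rigorous requires carefully combining the holonomy extension along $\G$ (using the transversality $M\pitchfork\G$) with the Steinness of $X$, and the hypothesis $n\ge 3$ is essential for the Hartogs-type extension of the polar coefficients across the domain. The verification of hypothesis (3), and the Chern-class vanishing in (2), are the secondary technical points where the topology of $M$ and the ambient Stein geometry must be balanced.
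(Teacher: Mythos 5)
Your overall strategy---verify the hypotheses of Theorem~\ref{Theorem:sheaf} on $M$ and then push the resulting logarithmic form into $X$---is genuinely different from the paper's, and it has a gap at its central step: the verification of hypothesis (3), namely $H^1(M,\Om_{\fa}^{\O})=0$. Your argument via the exact sequence $0\to\bc\to\O_\fa\to\Om_\fa^\O\to 0$ requires controlling $H^1(M,\O_\fa)$ and $H^2(M,\bc)$, and neither is ``controlled by Steinness plus transversality'': $M$ is only a real hypersurface, $\O_\fa$ is not the restriction of a coherent sheaf on $X$ to which Cartan's Theorem B could apply, and a simply-connected $(2n-1)$-manifold can perfectly well have $H^2(M,\bc)\ne 0$. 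The paper itself signals that this implication is not available: Remark~\ref{Remark:moregeneral} lists ``$M$ simply-connected'' and ``$H^1(M,\Om_\fa^\O)=0$'' as \emph{alternative} hypotheses, not as one following from the other. A similar objection applies to your verification of (2): simple connectedness of $\po A$ does not force $c_1([\Ga])=0$ on a Stein neighborhood of $\ov A$.

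The paper's route avoids all of this by leaving $M$ as quickly as possible. It uses Proposition~\ref{Proposition:extension} to produce the closed transversely meromorphic $\eta$ on $M$ with $d\Om=\eta\wedge\Om$, extends $\eta$ to a genuine closed meromorphic $1$-form on a tubular neighborhood $W$ by transversality, and then to $W\cup\ov A$ by Levi's extension theorem. From then on everything happens on the complex manifold $W\cup\ov A$: simple connectedness of $M$ is used only to integrate the closed \emph{holomorphic} part $\tilde\vr=\tilde\eta-a\,d\tilde f/\tilde f$ near $M$ (then Levi again extends the primitive inward), which reduces $\tilde\eta$ to $a\,d\tilde f/\tilde f$ and forces the holonomy of each component of $\widetilde\La$ to be abelian linearizable, ruling out the $\bh_k$ alternative of Proposition~\ref{Proposition:extension} by a residue argument in the multi-component case. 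Two further ingredients that your proposal omits but the paper needs are: the finiteness of $\sing(\G)\cap A$ (no positive-dimensional compact analytic subsets of $A$) together with Malgrange's theorem to get local first integrals at those points --- this is where $n\ge3$ actually enters, not in a Hartogs step --- and the coherence/uniqueness argument for gluing the closed forms constructed near the several irreducible components of $\La$. As written, your reduction to Theorem~\ref{Theorem:sheaf} cannot be completed under the stated hypotheses.
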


Theorem~\ref{Theorem:main} also holds under some more general conditions (see Remark~\ref{Remark:moregeneral}).

 Using Theorem~\ref{Theorem:main} and \cite{Scardua-Ito1} we promptly obtain:
\begin{Corollary}
\label{Corollary:sphere} Under the  hypotheses of
Theorem~\ref{Theorem:main} if $A$ is diffeomorphic to a ball $B^{2n}
\subset \bc^n$ and $M$ is diffeomorphic to a sphere $M \simeq
S^{2n-1} \subset \re^{2n}$ then $n=2$ and $\G|\_{\ov A}$ is
holomorphically conjugate to a linear foliation $\L\colon z\,dw -
\la w\,dz = 0$ on $\bc^2$ with $\la \in \bc \backslash \re$.
\end{Corollary}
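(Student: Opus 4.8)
The plan is to feed the output of Theorem~\ref{Theorem:main} into the sphere classification of \cite{Scardua-Ito1}, and then to read off the transverse constant $\la$ from the attractor hypothesis. First I would verify that the hypotheses of Theorem~\ref{Theorem:main} are in force: the boundary $M\simeq S^{2n-1}$ is simply connected for $n\ge 2$, the ambient $X$ is Stein, and $\fa=\G|_M$ is holomorphically transversely affine on $M\backslash\La$ with $\La=\Ga\cap M$ analytic and invariant, each of whose leaves carries an attractor in its holonomy group. Theorem~\ref{Theorem:main} then furnishes a neighbourhood $W$ of $\ov A$ in $X$ on which $\G$ is logarithmic, i.e. defined by a closed meromorphic $1$-form $\xi=\sum_i\la_i\,df_i/f_i$ with simple poles along an invariant analytic divisor $D=\bigcup_i\{f_i=0\}$.

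Next I would exploit the ball/sphere geometry. Since $\G$ is transverse to $M=\po A$ and $A\simeq B^{2n}$ is diffeomorphic to a ball, the pair $(\G,M)$ is precisely the situation classified in \cite{Scardua-Ito1}: a germ of logarithmic holomorphic foliation defined near a closed ball and transverse to its boundary sphere. The point I would emphasise is that transversality to $\po A$ forces each leaf of $\G$ to meet $M$ cleanly and to be swept inward, so that the simple connectivity of $A$ trivialises the transverse monodromy and confines the polar divisor $D$ to pass through a single singularity, with its residues $\la_i$ tightly constrained. Invoking \cite{Scardua-Ito1} I then obtain that this configuration can occur only for $n=2$, and that in that case $\G|_{\ov A}$ is holomorphically conjugate to a linear foliation $\L\colon z\,dw-\la w\,dz=0$ on $\bc^2$.

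It remains to pin down $\la$. The attractor hypothesis is invariant under holomorphic conjugacy, hence passes to the linear model. For $z\,dw-\la w\,dz=0$ the axis $\{z=0\}$ is a leaf meeting $M$ in a circle leaf of $\La$, and the holonomy of its generating loop on a transversal is the linear germ $w\mapsto e^{2\pi i\la}\,w$ (and, on the other axis, $z\mapsto e^{2\pi i/\la}\,z$). A fixed point of such a germ is attracting exactly when its multiplier has modulus $\neq 1$, i.e. when $e^{-2\pi\,\mI(\la)}\neq 1$; both axes therefore force $\mI(\la)\neq 0$, so $\la\in\bc\backslash\re$, completing the identification of $\L$.

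The main obstacle is the rigidity step of the second paragraph: converting mere transversality of a logarithmic foliation to the sphere into the conclusions $n=2$ and linearity. This is the content imported from \cite{Scardua-Ito1}, and the delicate part is to check that our $\G$ genuinely meets its hypotheses --- in particular that the logarithmic form $\xi$ produced by Theorem~\ref{Theorem:main} has polar divisor compatible with transversality to $M$, and that no exotic logarithmic model transverse to $S^{2n-1}$ survives for $n\ge 3$. A secondary point to watch is the branch and orientation conventions in the holonomy computation, so that the attractor condition yields exactly $\la\notin\re$ rather than a one-sided inequality.
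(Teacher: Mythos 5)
Your proposal is correct and follows essentially the same route as the paper, which derives the corollary immediately by combining Theorem~\ref{Theorem:main} (giving that $\G$ is logarithmic near $\ov A$) with the classification of foliations transverse to spheres in \cite{Scardua-Ito1}. Your additional holonomy computation showing $\la\notin\re$ is a harmless elaboration of what that citation already supplies.
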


\section{Transversely holomorphic foliations with homogeneous transverse structure}

Let $\fa$ be a transversely holomorphic foliation of codimension one on $M^{\ell +2}$.
Thus any point $p\in M$ has an open neighborhood $p \in U\subset M$ where we have local coordinates $(x,z)\in \mathbb R^\ell \times \mathbb C$ where $\fa$ is given by $z=c \in \mathbb C$.
We can, as in \cite{Brunella} introduce the sheaf $\mathcal O(\fa)$ (respectively $\mathcal M(\fa)$)
of {\it transversely holomorphic/meromorphic} functions on $M$ as
given by the  functions defined on open subsets of $M$ which are
locally constant along the leaves of $\fa$ and
transversely holomorphic. This means that such a function writes
in the local coordinates as $f(z)$ with $f(z)$ holomorphic. Similarly we can introduce the sheaf $\Omega^1(\fa)$ (respectively, $\Omega_m^1(\fa)$) of  transversely holomorphic/meromorphic one-forms on $M$.

\begin{Definition}
{\rm
A smooth foliation $\fa$ of real codimension two has a {\it holomorphic
homogeneous transverse structure\/} if there are a complex Lie
group $G$, a connected closed complex subgroup $H < G$ such that $\fa$
admits an atlas of submersions $y_j\colon U_j \subset M \to G/H$
satisfying $y_i = g_{ij}\circ y_j$ for some locally constant map
$g_{ij}\colon U_i \cap U_j \to G$ for each $U_i \cap U_j \ne
\emptyset$. In other words, $\fa$ has a  transversely holomorphic atlas of
submersions whose  transiction maps are given by left
translations on $G$ and submersions taking values on the
homogeneous space $G/H$. In particular $\fa$ is transversely holomorphic. We shall say that $\fa$ is $\mathbb C$-transversely
homogeneous {\it of model\/} $G/H$.
}
\end{Definition}

\vglue .1in The foliation is {\it $\mathbb C$-transversely additive}
when there are  maps $g_{ij}$ in the definition of holomorphic
homogeneous transverse structure which are of the form $g_{ij}(z) =
z+b_{ij}$\,, \, $b_{ij} \in \bc$ locally constant in $U_i \cap
U_j$\,. If $g_{ij}(z) = a_{ij}\,z + b_{ij}$\,, for locally constant
$a_{ij} \in \bc-\{0\}$ and $b_{ij} \in \bc$ we say that $\fa$ is
{\it $\mathbb C$-transversely affine\/} and it is {\it $\mathbb
C$-transversely projective\/} if $g_{ij}(z) = \frac{a_{ij}\,z +
b_{ij}}{c_{ij}\,z
+ d_{ij}}$ with locally constant $\begin{pmatrix} a_{ij} &b_{ij}\\
c_{ij} &d_{ij}\end{pmatrix} \in \SL(2,\bc)$.

Next we give a couple of examples of these structures.

\begin{Example}{\rm   We will
define a $\mathbb C$-transversely  affine (transversely holomorphic)
foliation on a compact manifold. This will be a non-singular
foliation with dense leaves which are biholomorphic to $\mathbb
C^*\times\mathbb C^*$ or cylinders $\mathbb C^*/\mathbb
Z\times\mathbb C^*$. We begin with a general construction inspired
in \cite{seke}. Let $M$ be a compact differentiable manifold of real
dimension $n$. Let $\omega$ be a closed 1-form on $M$ and $f\colon M
\to M$  a smooth diffeomorphism such that $f^*\omega = \la \omega$
for some $\lambda \in \mathbb C^*$ with $|\la| \ne 1$. Define $\Om$
on $M\times\mathbb C^*$ by $\Om(x,t) = t.\omega(x), \, x\in M, t \in
\mathbb C^*$. Then we have $d\Om = \eta\wedge\Om$ where $\eta(x,t)$
is defined by $\eta(x,t) = \frac{dt}{t}$\,. Then
 $d\eta = 0$ and $\eta$ is transversely holomorphic,
 thus $\Om$ defines a
codimension one transversely holomorphic foliation $\widetilde\fa$
in the product $M\times\mathbb C^*$. The foliation $\tilde \fa$ is
$\mathbb C$-transversely affine as a consequence of
Proposition~\ref{Proposition:forms}. Now we consider the action
$\Phi\colon \mathbb Z\times(M\times\mathbb C^*) \longrightarrow
M\times\mathbb C^*, n,(x,t) \longmapsto (f^n(x),\la^{-n}.t)$. This
is a locally free action generated by the transversely holomorphic
diffeomorphism $\vr\colon M\times\mathbb C^* \to M\times\mathbb
C^*$, $\vr(x,t) = (f(x),\la^{-1}\,t)$. Notice that the action of
$\vr$ preserves $\tilde \fa$ as well as its $\mathbb C$-affine
transverse structure. Indeed we have $\vr^*\,\Om(x,t) =
\la^{-1}\,t.\la\,w(x) = \Om(x,t)$ and $\vr^*\eta=\eta$.  In
particular,  $\widetilde\fa$ induces a codimension one transversely
holomorphic foliation $\fa$ on the quotient manifold $V =
(M\times\mathbb C^*)/\mathbb Z$. The foliation $\fa$ inherits a
$\mathbb C$-affine transverse  structure induced by the pair
$(\Om,\eta)$. This is a pretty general construction. Let us give a
more concrete one.

 We consider a variant of the Furness
example (see also \cite{Scardua1}): Consider the unimodular map $U = \begin{pmatrix} 1 &1\\
1 &2\end{pmatrix} : \mathbb R^2 \to \mathbb R^2; \quad U(x,y) =
(x+y, x+2y).$ This map induces a biholomorphism $f\colon T^2 \to
T^2$, where $T^2 =S^1 \times S^1$.  We consider $\tilde{\tilde
\omega} := (1+\sqrt 5)dx-2dy$ in $\mathbb R^2$. We then have
$U^*\tilde{\tilde \omega} = \lambda .\tilde{\tilde \omega}$ where
$\lambda = \frac{2}{3-\sqrt 5}$ and $U$ is $\mathbb Z\times\mathbb
Z$ invariant ($\mathbb Z\times\mathbb Z$ acts on $\mathbb R^ 2$ by
the natural product action) so that it induces a 1-form  $\omega$ in
the torus $T^2$. The 1-form $\omega$ satisfies $f^*\omega =
\lambda.\omega$. The foliation induced on $V = (M\times\mathbb
C^*)/\mathbb Z =(T^2\times\mathbb C^*)/\mathbb Z$ is $\mathbb
C$-transversely affine. }
\end{Example}

\begin{Example}
{\rm
We consider a {\it Riccati foliation} on $M^3=S^1 \times S^2$ as follows.
Recall that $S^1=\mathbb RP^1=\mathbb R \cup \{\infty\}$ and $S^2=\mathbb CP^1= \ov{\mathbb C}= \mathbb C \cup \{\infty\}$. Given affine coordinates $x\in \mathbb R\subset  S^1$ and $z\in \mathbb C\subset S^2$ we consider the differential equation
\[
\dot x= p(x), \, \dot z = a(x) z^2 + b(x) z + c(x)
\]
where $p(x), a(x), b(x), c(x) \in \mathbb R[x]$ are polynomials.
In real coordinates we may write $z=z_1 + i z_2$ where $z_1, z_2 \in \mathbb R$ and $i^2 =-1$ and rewrite the above ODE as
\[
\dot x= p(x), \, \dot z_1 = a(x) ( z_1 ^2 + z_ 2 ^2 ) + b(x) z_1 + c(x), \, \dot z_2 = 2 a(x) z_1 z_2 + b(x)z_2
\]
which defines a polynomial vector field $\mathcal Z$ on $\mathbb R \times \mathbb R^2 \subset M^3$. This polynomial vector field induces a transversely holomorphic foliation $\fa(\mathcal Z)$ on $M^3 \setminus S$ for a finite singular set $S\subset M^3$ given in the affine space by the pairs
$(x_j,z_j)$ such that $p(x_j)=0$ and $a(x_j) z_j ^2 + b(x_j) z_j +
c(x_j)=0$. The foliation $\fa(\mathcal Z)$ has the following property:

$\fa(\mathcal Z)$ is transverse to the fibers of the fibration
$S^1 \times S^3 \to S^1, ,\ (x,z) \mapsto x$, except for a finite number of such fibers (those given in the affine part by $p(x)=0$ and perhaps the fiber $x=\infty$).  This implies, together with Ehresmann theorem on fibrations \cite{Godbillon} that $\fa(\mathcal Z)$ is
transverse to the fibers of the fiber bundle $S^1 \times S^2 \to S^1$
except for those mentioned fibers. All these non-transverse fibers are invariant by $\fa(\mathcal Z)$. In particular, if $\Lambda \subset M^3$ denotes the union of these invariant fibers then the restriction $\fa(\mathcal Z)\big|_{M ^3 \setminus \Lambda}$ is conjugate to the suspension of a certain representation $\vr \colon \pi_1(S^1 \setminus \sigma) \to \Diff(S^2)$. Since the foliation is clearly transversely holomorphic, in a way compatible with the holomorphic structure
of the fibers of the bundle $M^3 \to S^1$, we conclude that the image $\vr(\pi_1(S^1 \setminus \sigma))\subset \Diff(S^2)$ consists of holomorphic diffeomorphisms of the Riemman sphere, i.e., we have a suspension of a group of Moebius maps $G\subset SL(2, \mathbb C)$.
This shows that $\fa(\mathcal Z)$ is indeed, transversely projective in $M\setminus \Lambda$.

}

\end{Example}

The proof of the next proposition follows the one in
\cite{Scardua1}.
\begin{Proposition}
\label{Proposition:homogeneous}
 Let $\fa$ be a codimension one
transversely holomorphic foliation on $M$. If $\fa$ is
holomorphically transversely homogeneous then $\fa$ is
holomorphically transversely additive, affine or projective.
\end{Proposition}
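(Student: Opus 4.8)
The plan is to reduce the abstract datum of the model $G/H$ to a concrete picture of holomorphic vector fields on a one-dimensional disc, and then invoke the classical classification of finite-dimensional transitive Lie algebras of holomorphic vector fields on a curve. Since $\fa$ has complex codimension one, the homogeneous space $G/H$ is a one-dimensional complex manifold on which $G$ acts transitively and holomorphically by left translations; write $\rho\colon G \to \Hol(G/H)$ for this action. The transition maps $g_{ij}$ of the atlas act on the transverse model precisely through $\rho$, so it suffices to understand the local form of $\rho(G)$ near a base point $o = eH \in G/H$. (The kernel of $\rho$ is irrelevant, since only the biholomorphisms $\rho(g_{ij})$ enter the transverse structure.)

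First I would pass to the infinitesimal action. Differentiating $\rho$ produces a Lie algebra homomorphism from $\mathfrak g = \mathrm{Lie}(G)$ into the Lie algebra $\mathfrak X(G/H)$ of holomorphic vector fields on $G/H$, sending $\xi \in \mathfrak g$ to its fundamental vector field $\xi^\sharp$. Its image $\mathfrak h \subset \mathfrak X(G/H)$ is a finite-dimensional Lie algebra, of dimension at most $\dim_{\mathbb C} G$, and it is \emph{transitive}: because $G$ acts transitively, the values $\{\xi^\sharp(p) : \xi \in \mathfrak g\}$ span $T_p(G/H)$ at every point $p$. Choosing a holomorphic coordinate $z$ on a neighborhood of $o$ with $z(o)=0$, every element of $\mathfrak h$ is represented by a vector field $a(z)\po_z$ with $a$ holomorphic.

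The core step is the classification of such a Lie algebra. The claim (Lie's theorem on Lie algebras of holomorphic vector fields on a one-dimensional domain) is that a finite-dimensional transitive Lie algebra $\mathfrak h$ of holomorphic vector fields on an open subset of $\mathbb C$ is, after a local holomorphic change of the coordinate $z$, conjugate to exactly one of
\begin{equation*}
\langle \po_z\rangle, \qquad \langle \po_z,\, z\po_z\rangle, \qquad \langle \po_z,\, z\po_z,\, z^2\po_z\rangle \cong \mathfrak{sl}(2,\mathbb C).
\end{equation*}
I would prove this by examining the isotropy subalgebra $\mathfrak h_0 = \{a(z)\po_z \in \mathfrak h : a(0)=0\}$: transitivity provides a field with $a(0)\ne 0$, which after a local change of coordinate becomes $\po_z \in \mathfrak h$, and the filtration of $\mathfrak h_0$ by order of vanishing at $0$ must terminate, for otherwise the brackets with $\po_z$ would manufacture infinitely many linearly independent fields, contradicting $\dim \mathfrak h < \infty$. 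The bracket relations then force the remaining coefficient functions to be polynomials of degree at most two and bring the generators into the normal forms $z\po_z$ and $z^2\po_z$.

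Finally, translating back, the connected Lie group locally generated by $\mathfrak h$ acts, in the coordinate $z$, as the group of translations $z\mapsto z+b$, as the affine group $z\mapsto az+b$, or as the M\"obius group $z\mapsto (az+b)/(cz+d)$ with $\begin{pmatrix} a&b\\ c&d\end{pmatrix}\in \SL(2,\mathbb C)$, according to the three cases above. Since each $\rho(g_{ij})$ lies in this group and the maps $g_{ij}$ are locally constant, the foliation $\fa$ is $\mathbb C$-transversely additive, affine, or projective respectively, as claimed. The hard part will be the classification step, namely showing that finite-dimensionality forces the coefficients $a(z)$ to be polynomials of degree at most two; care is also needed to check that the normalizing coordinate change is compatible with the holomorphic transverse structure, so that the conclusion descends to $\fa$ itself and not merely to the abstract model $G/H$.
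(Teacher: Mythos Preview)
Your approach is correct in spirit but takes a genuinely different route from the paper's. The paper argues globally: since $G/H$ is a Riemann surface, one passes to its universal cover, invokes the Riemann--Koebe Uniformization Theorem to identify it with $\overline{\mathbb C}$, $\mathbb C$, or $\mathbb D$, and then observes that the action of $G$ (lifted to the cover) lands in $\mathrm{PSL}(2,\mathbb C)$, $\Aff(\mathbb C)$, or $\mathrm{PSL}(2,\mathbb R)$ respectively. This is a two-line proof once uniformization is granted, and it immediately produces a \emph{global} coordinate in which the transition maps are M\"obius, affine, or additive.

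Your argument instead works infinitesimally and locally: you classify the image Lie algebra $\mathfrak h\subset\mathfrak X(G/H)$ via Lie's theorem on finite-dimensional transitive algebras of holomorphic vector fields on a curve, obtaining the three normal forms $\langle\po_z\rangle$, $\langle\po_z,z\po_z\rangle$, $\langle\po_z,z\po_z,z^2\po_z\rangle$, and then integrate. This avoids uniformization and is more algebraic, but it costs you two things. First, Lie's classification is local, so you must argue that the normalizing coordinate $z$ extends coherently over the whole model (here the completeness of fundamental vector fields helps). Second, and more importantly, your sentence ``each $\rho(g_{ij})$ lies in this group'' presumes $G$ connected; the definition does not require this. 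What you actually know is that each $\rho(g)$ \emph{normalizes} $\mathfrak h$ (since $(L_g)_*\xi^\sharp=(\mathrm{Ad}_g\xi)^\sharp$), so you should check that the normalizer of each of the three model algebras inside the local biholomorphism group is still contained in the M\"obius group. This is true and elementary, but it is a step you should not skip. With that patched, your proof goes through; the paper's uniformization argument simply sidesteps these local-to-global and connectedness issues by working with $\Aut(R)$ from the start.
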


\begin{proof}
By the hypotheses the quotient  $R=G/ H$ is a Riemman surface. We may assume that $R$ is simply-connected (otherwise we consider the universal covering of $R$ and lift the submersions to this space).  By the Riemann-Koebe Uniformization theorem we have a conformal equivalence  $R \equiv \ov{\mathbb C}, \mathbb C$ or $\mathbb D$ the unitary disc. This implies that
either $G\subset \Aut(\ov{\mathbb C})=\mathbb P SL(2,\mathbb C), G\subset \Aut(\mathbb C)=\Aff(\mathbb C)$ or
$G\subset \Aut(\mathbb D)\cong \mathbb P SL(2,\mathbb R)$.  The proposition follows.
\end{proof}

\section{The transversely additive case}

In what follows we study the following situation: $\fa$ has  a
$\mathbb C$-additive transverse structure on
$M\backslash\La$ where $\La \subset M$ is a finite union of
compact leaves of $\fa$. This means that there is an open cover of $M\setminus \Lambda$ by open sets $U_j$ where are defined submersions $y_j \colon U_j \to \mathbb C$ such that $\fa\big|_{U_j}$ is given by $dy_j=0$ and for each intersection $U_i \cap U_j$ we have
$y_i = y_j + a_{ij}$ for some locally constant $a_{ij}$.
Taking the differential $dy_i = dy_j$ we conclude that
there exists a closed transversely
holomorphic 1-form $\omega$ on $M\backslash\La$ such that
$\fa|\_{M\backslash\La}$ is given by $\omega=0$ and, by
construction, $\omega\big|_{U_j}=dy_j$. Thus $\omega$ is invariant by the holonomy pseudogroup of
$\fa$ in $M\backslash\La$.
 Our aim is to study the
holonomy group of a leaf $L \in \fa$, \, $L \subset \La$.

\begin{Proposition}Given any leaf $L \subset \La$ the holonomy
group of $L$ is solvable.
\end{Proposition}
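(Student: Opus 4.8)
The plan is to reduce the statement to an assertion about a group of germs of biholomorphisms of $(\bc,0)$ preserving a fixed holomorphic $1$-form, and then to exhibit this group as an extension of an abelian group by an abelian group. Fix a point $p\in L$ and a small transversal disc $(D,0)\hookrightarrow M$ with $D\cap L=\{0\}$ and holomorphic coordinate $z$, so that the holonomy group $\Hol(L)$ is realized as a group $G$ of germs at $0$ of biholomorphisms $h\colon(\bc,0)\to(\bc,0)$. Since $D\setminus\{0\}\subset M\setminus\La$, the invariant closed $1$-form $\om$ restricts to $D\setminus\{0\}$ as a holomorphic $1$-form $\om=g(z)\,dz$ with an isolated singularity at $0$, and holonomy invariance becomes $h^{*}\om=\om$, i.e. $g(h(z))\,h'(z)=g(z)$, for every $h\in G$. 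After shrinking $D$ I would arrange that $g$ has no zeros on $D\setminus\{0\}$, so that a local primitive of $\om$ is everywhere a local biholomorphism.

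Next I would pass to the multivalued primitive $\Phi(z)=\int^{z}\om$, which satisfies $d\Phi=\om$ and whose only ambiguity is the monodromy $\Phi\mapsto\Phi+2\pi i\,c$ around $0$, where $c=\Res_{0}\om$. For $h\in G$ the identity $h^{*}\om=\om$ gives $d(\Phi\circ h)=d\Phi$, hence $\Phi\circ h=\Phi+\tau(h)$ for a constant $\tau(h)\in\bc$ well defined modulo $2\pi i\,c\,\bz$. A direct computation with $\Phi\circ(h_{1}h_{2})=(\Phi\circ h_{1})\circ h_{2}$ shows that
\[
\bar\tau\colon G\longrightarrow \bc/2\pi i\,c\,\bz,\qquad h\longmapsto \tau(h)\ \mathrm{mod}\ 2\pi i\,c\,\bz
\]
is a group homomorphism into an abelian group. (When $c=0$ the form $\om$ has a single valued primitive and $\bar\tau$ takes values in $(\bc,+)$.)

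Then I would analyze the kernel $K=\ker\bar\tau$. If $h\in K$ then $\Phi\circ h=\Phi+2\pi i\,c\,n(h)$ for some $n(h)\in\bz$; since $\Phi$ is a local biholomorphism this forces $h$ to act, in the coordinate $\Phi$, as the translation by $2\pi i\,c\,n(h)$, and $h=\mathrm{id}$ as soon as $n(h)=0$. Hence $h\mapsto n(h)$ is an injective homomorphism $K\hookrightarrow\bz$, so $K$ is cyclic, in particular abelian (and trivial when $c=0$). Therefore $G$ sits in an extension $1\to K\to G\xrightarrow{\bar\tau}\bc/2\pi i\,c\,\bz$ in which both $K$ and the image $\bar\tau(G)$ are abelian, so $[G,G]\subset K$ and $G$ is metabelian; in particular $\Hol(L)\cong G$ is solvable.

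The step I expect to be the main obstacle is the reduction ``after shrinking $D$ we may assume $g$ has no zeros on $D\setminus\{0\}$'', equivalently the control of the singularity of $\om$ at the compact leaf $L$: this is immediate when $0$ is a pole or a removable singularity of $g$ (the zeros of $g$ are then isolated away from $0$), but if $0$ were an essential singularity the zeros of $g$ could accumulate at $0$ and $\Phi$ would fail to be a local biholomorphism near them, breaking the injectivity $K\hookrightarrow\bz$. Handling this case — showing either that it does not occur for the holonomy of a compact leaf carrying an invariant additive transverse structure, or that $K$ remains abelian regardless — is where the real work lies.
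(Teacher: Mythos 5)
Your reduction to a group $G$ of germs at $0$ preserving $\om|_{D\setminus\{0\}}=g(z)\,dz$, and the construction of the homomorphism $\bar\tau\colon G\to\bc/2\pi i\,c\,\bz$ from the multivalued primitive, are fine. (Incidentally, your worry about zeros of $g$ is vacuous: $\om$ is locally $dy_j$ for submersions $y_j$, hence nowhere vanishing on $M\setminus\La$, and $g\ne0$ on all of $D\setminus\{0\}$ by transversality of $D$; the only uncontrolled issue is the singularity type of $g$ at $0$.) The genuine gap is the assertion that $K=\ker\bar\tau$ injects into $\bz$ via $h\mapsto n(h)$. That step tacitly uses $\Phi$ as an injective coordinate, whereas $\Phi$ is only a local biholomorphism on $D\setminus\{0\}$ (or its universal cover) and its fibers can be large. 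Concretely, take $\om=dz/z^{k+1}$ with $k\ge2$: then $c=0$, $\Phi(z)=-z^{-k}/k$ is single valued, and the rotation $h(z)=e^{2\pi i/k}z$ satisfies $h^*\om=\om$ and $\Phi\circ h=\Phi$, so $h\in K$ with $n(h)=0$ but $h\ne\mathrm{id}$ (this also contradicts your parenthetical that $K$ is trivial when $c=0$). So $K$ need not be cyclic for the reason you give. When $g$ extends meromorphically to $0$ one can still salvage abelianity of $K$ by normalizing $\Phi$ to $z^m$ or $\log z$, but when $0$ is an essential singularity of $g$ --- precisely the case you flag as ``where the real work lies'' and leave open --- I see no way to control $K$ along these lines, so the argument is incomplete exactly where it needs to be completed.

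The paper takes the opposite route and thereby avoids analyzing $\om$ at the compact leaf altogether: it argues by contradiction. If $\Hol(\fa,L)$ were non-solvable then, by the theorem of Nakai and Belliart--Liousse--Loray, hyperbolic fixed points of elements of the holonomy pseudogroup would be dense in $D$. At such a point $q\ne p$ one linearizes the corresponding germ, $f(z)=\la z$ with $|\la|\ne1$, and the invariance $f^*\om=\om$ forces, by comparing Laurent coefficients, $\om=g_{-1}\,dz/z$ near $q$; since $\om$ is holomorphic (and nonvanishing) at $q$, this gives $g_{-1}=0$, hence $\om\equiv0$, a contradiction. If you want to keep your direct strategy you would need an independent proof that a group of germs fixing $0$ and preserving a nowhere-zero holomorphic $1$-form on the punctured disc is solvable with no a priori control on the singularity at $0$; the Nakai-type density theorem is exactly the tool that replaces this missing step.
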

\begin{proof} Given a transverse disc $D \subset M$, \, $D
\cap \La = D \cap L = \{p\}$, \, $D\pitchfork\fa$ we consider the
holonomy representation $\Hol(\fa,L,D,p) \cong G < \Diff(\bc,0)$
as a subgroup of the group of germs of holomorphic diffeomorphisms
fixing the origin $0 \in \bc$.  Assume by contradiction that
$\Hol(\fa,L,D,p)$ is not solvable. By \cite{Nakai},\cite{Loray2}
 we may find in $D$ a dense subset of points $q \in D$ which
are hyperbolic fixed points for elements $f_q \in
\Hol(\fa,L,D,p)$. Given such a fixed point $f_q(q)=q$ we choose a
local chart $z\colon V \subset D \to \bc$, taking $q$ onto $0 \in
\bc$ and such that $f_q(z) = \la z$ with $|\la| \ne 1$ is linear.
The leaf $L_q \in \fa$ that contains $q$ has therefore an homotopy
class $\ga \in \pi_1(L_q,q)$ that originates the holonomy map
$f_\ga = f_q|\_V \in \Hol(\fa,L_q,V,q)$.

\par As we have seen, there exists a closed transversely
holomorphic 1-form $\omega$ on $M\backslash\La$ such that
$\fa|\_{M\backslash\La}$ is given by $\omega=0$ and, by
construction, $\omega$ is invariant by the holonomy pseudogroup of
$\fa$ in $M\backslash\La$. In particular, we must have
$f_\ga^*(\omega|\_V) = \omega|\_V$\,. We write $\omega|\_V =
g(z)\,dz$ to obtain $f_\ga^*(\omega|\_V) = \omega|\_V \Rightarrow
(*) \la g(\la z) = g(z) \Rightarrow g(z) = \frac{1}{z}g_{-1}$ for
some constant $g_{-1} \in \bc^*$: indeed, write $g(z) =
\sum\limits_{j=0}^{+\infty} g_j\,z^j$ in Laurent series, then (*) is
equivalent to $\sum\limits_{j=-\infty}^{+\infty} g_j\,\la^{j+1}z^j =
\sum\limits_{j=-\infty}^{+\infty} g_zz^j \Rightarrow
(\la^{j+1}-1)\cdot g_j=0$, \, $\forall\, z \in \bz \Rightarrow
g_j=0$, \, $\forall\,j \ne -1$.  Thus $\omega|\_V =
g_{-1}\frac{dz}{z}$ and therefore, since the set of hyperbolic fixed
points is dense in $D$ we must have $g_{-1} = 0$ and $\omega|\_V
\equiv 0$. This implies $\omega\equiv 0$ in $M\backslash \La$,
contradiction. \end{proof}

If we add a dynamical hypothesis, then more can be said:

\begin{Proposition}
\label{Proposition:logarithmic}
Let $\fa$ be a transversely
holomorphic codimension one foliation on $M$ and suppose that
$\fa$ is holomorphically transversely additive in $M\backslash\La$
where $\La \subset M$ is a finite union of compact leaves. Then
$\fa$ is given by closed transversely meromorphic $1$-form
$\omega$ with simple poles and polar divisor $(\omega)_\infty =
\La$ provided that each leaf $L \subset \La$ contains an attractor
on its holonomy group.
\end{Proposition}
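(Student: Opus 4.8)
The plan is to show that the holonomy-invariant closed transversely holomorphic $1$-form $\omega$ on $M\setminus\La$ already at our disposal extends across each compact leaf $L\subset\La$ as a transversely meromorphic form with a genuine simple pole, and that the union of these poles is exactly $\La$. Fix such a leaf $L$ and a transverse disc $D$ with $D\cap\La=D\cap L=\{p\}$ and $D\pitchfork\fa$, and identify $(D,p)$ with $(\bc,0)$ by a holomorphic chart $z$. On the punctured disc $D^*=D\setminus\{p\}$ the foliation is regular, so I may write $\omega|_{D^*}=g(z)\,dz$ with $g$ holomorphic on $D^*$; the whole point is to control the Laurent expansion $g(z)=\sum_j g_j z^j$ at $0$.

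First I would extract from the dynamical hypothesis a single linearizable holonomy map. Since $L$ carries an attractor on its holonomy group $\Hol(\fa,L,D,p)<\Diff(\bc,0)$, there is an element $h$ fixing $p$ with $0<|h'(p)|<1$. By Koenigs' theorem such a hyperbolic germ is analytically linearizable, so after a biholomorphic change of transverse coordinate $w=\phi(z)$, $\phi(0)=0$, $\phi'(0)\ne 0$, the map $h$ becomes $w\mapsto\la w$ with $0<|\la|<1$. Writing $\omega=G(w)\,dw$ in this coordinate, the holonomy invariance $h^*\omega=\omega$ reads $\la\,G(\la w)=G(w)$, which by the same Laurent argument used in the previous proposition forces $(\la^{j+1}-1)G_j=0$ for every $j$; as $|\la|\ne 1$ this gives $G_j=0$ for all $j\ne -1$, hence $\omega=G_{-1}\,\frac{dw}{w}$.

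Next I would transfer this back to the original coordinate and globalize. Since $\phi(z)=z\,u(z)$ with $u$ holomorphic and $u(0)\ne 0$, one has $\frac{dw}{w}=\frac{\phi'(z)}{\phi(z)}\,dz=\bigl(\tfrac1z+\tfrac{u'(z)}{u(z)}\bigr)dz$, so $\omega=G_{-1}\bigl(\tfrac1z+\text{holomorphic}\bigr)dz$ has at worst a simple pole at $p$. Moreover $G_{-1}\ne 0$: otherwise $\omega\equiv 0$ on $D^*$ and, $\omega$ being a defining form of $\fa$ on the connected component of $M\setminus\La$ meeting $D^*$, this would contradict that $\omega$ defines a codimension one foliation there. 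Thus $\omega$ extends transversely meromorphically across $L$ with nonzero residue, i.e. with a true simple pole along $L$. Running this over the finitely many compact leaves composing $\La$ — each of which carries an attractor — patches these local extensions (they all coincide with $\omega$ on $M\setminus\La$) into a closed transversely meromorphic $1$-form, still denoted $\omega$, defining $\fa$, with simple poles and $(\omega)_\infty=\La$.

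The hard part will be the dynamical input at exactly the right strength: the Laurent computation collapses $g$ to a single term precisely because the attracting multiplier satisfies $|\la|\ne 1$. Were the attractor merely tangent to the identity (a parabolic flower with $h'(p)=1$), the invariant form would instead be the time form $dz/X$ of the associated vector field, forcing a pole of order $\ge 2$ and breaking the simple-pole conclusion; so the content of ``attractor'' must be read as a hyperbolic attracting fixed point, and it is this hyperbolicity — guaranteeing both Koenigs linearization and the vanishing of all Laurent coefficients but one — that drives the argument. A secondary point to verify is that the residue is forced to be nonzero on \emph{every} leaf of $\La$, so that the polar divisor equals $\La$ and not a proper subset of it.
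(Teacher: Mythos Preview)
Your proof is correct. The paper arrives at the same local conclusion $\omega|_{D}=c\,\dfrac{dz}{z}$ in a linearizing coordinate but by a different mechanism: rather than running the Laurent computation $\la\,G(\la w)=G(w)$ directly on the transverse disc (as you do, and as the paper itself does in the proof of the preceding solvability proposition), its Extension Lemma passes to the universal cover of a punctured tubular neighbourhood, integrates the lifted form to a primitive $\widetilde F$, and then observes that the deck transformation coming from the puncture together with the lift of the hyperbolic holonomy make $d\widetilde F$ doubly periodic, hence a holomorphic $1$-form on a complex torus $\bc/(\bz\oplus\mu\bz)$, therefore a constant multiple of $d\tilde z$. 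Your route is more elementary and recycles an argument already in hand; the paper's route is more geometric but appeals to the classification of holomorphic $1$-forms on an elliptic curve. Both hinge on precisely the same hyperbolicity $|\la|\ne 1$: for you it kills all Laurent coefficients $G_j$ with $j\ne -1$, for the paper it makes $\mu\notin\re$ so that the lattice is genuinely rank two. Your final remark that ``attractor'' must mean a hyperbolic attractor is apt and matches the paper's actual use.
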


The proof is a straightforward consequence of the following lemma:

\begin{Lemma}[extension lemma]
\label{Lemma:extension} If $\omega$ is a closed transversely
holomorphic $1$-form defining $\fa$ in $W^* = W\backslash\La$ then
$\omega$ extends to a transversely meromorphic $1$-form in $W$
provided that the holonomy group $\Hol(\fa,L)$ contains an attractor
for each leaf $L \subset \La$.
\end{Lemma}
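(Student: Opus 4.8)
The plan is to reduce the statement to a local model on a transverse disc and there solve the functional equation coming from holonomy invariance, exactly as in the computation proving the previous proposition, but now extracting a pole instead of forcing $\omega$ to vanish.

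First I would fix a leaf $L \subset \La$ and a point $p \in L$, together with a transverse disc $D$ satisfying $D \cap \La = \{p\}$ and a holomorphic chart $z\colon D \to \bc$ carrying $p$ to $0$. Writing $\omega|\_D = g(z)\,dz$ with $g$ holomorphic on the punctured disc $D^* = D\setminus\{0\}$, the goal becomes to show that $g$ is meromorphic at $0$. Since $\omega$ is the closed $1$-form attached to the additive transverse structure on $W^*$, it is invariant under the holonomy pseudogroup of $\fa$; in particular every $h \in \Hol(\fa,L)$ satisfies $h^*(\omega|\_D) = \omega|\_D$, that is, $g(h(z))\,h'(z) = g(z)$ near $0$.

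Next I would invoke the attractor hypothesis. By assumption $\Hol(\fa,L)$ contains an element $h$ having $0$ as an attracting fixed point, so that $\la := h'(0)$ satisfies $0 < |\la| < 1$. Being hyperbolic, $h$ is holomorphically linearizable by Koenigs' theorem: a germ of biholomorphism conjugates $h$ to $w \mapsto \la w$. In the linearizing coordinate the invariant form reads $\tilde g(w)\,dw$ with $\la\,\tilde g(\la w) = \tilde g(w)$. Expanding $\tilde g(w) = \sum_{j} \tilde g_j\,w^j$ in Laurent series on $D^*$, this identity becomes $(\la^{j+1}-1)\,\tilde g_j = 0$ for all $j$; as $|\la| < 1$, the factor $\la^{j+1}-1$ vanishes only for $j=-1$, whence $\tilde g_j = 0$ for $j \ne -1$ and $\tilde g(w) = \tilde g_{-1}/w$, with $\tilde g_{-1}\neq 0$ since $\omega$ does not vanish identically. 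Thus $\omega$ has a simple pole along $L$ and extends as a transversely meromorphic $1$-form across $p$.

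Finally I would globalize. The meromorphic extension just produced is unique by analytic continuation, so the local extensions agree on overlaps; moreover the holonomy maps of $\fa$ are biholomorphisms and therefore transport the extension at $p$ to meromorphic extensions at every point of $L$. Covering the compact leaf $L$ by finitely many such charts, and repeating for each of the finitely many leaves composing $\La$, the local pieces patch to a transversely meromorphic $1$-form on all of $W$ with polar set contained in $\La$. The main obstacle is not the local computation --- which mirrors the one already carried out --- but two points of verification: that the presence of an attractor genuinely forces the hyperbolic normal form $0<|\la|<1$ (so that Koenigs linearization applies and the Laurent argument isolates the single exponent $j=-1$), and that the locally defined extensions remain consistent along the whole of the compact leaf, where compactness together with holonomy invariance of $\omega$ is precisely what makes the gluing succeed.
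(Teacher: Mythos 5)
Your proof is correct, but it follows a genuinely different route from the paper's. The paper passes to the universal covering of $W^*$ to write $\omega=d\widetilde F$ for a multivalued primitive, lifts the punctured holonomy disc via the exponential covering $\tilde z\mapsto e^{2\pi i\tilde z}$ so that the hyperbolic holonomy $z\mapsto\la z$ becomes the translation $\tilde z\mapsto\tilde z+\mu$ with $\la=e^{2\pi i\mu}$, and then observes that the two periodicities force $d\widetilde F$ to descend to a holomorphic $1$-form on the complex torus $\bc/(\bz\oplus\mu\bz)$, hence to be linear; pushing down gives $\omega|_D=c\,\frac{dz}{z}$, and Hartogs-type extension finishes. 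You instead work directly on the punctured transversal: after Koenigs linearization of the attractor, the holonomy invariance $\la\,\tilde g(\la w)=\tilde g(w)$ applied to the full Laurent expansion of $\tilde g$ on the punctured disc kills every coefficient except $\tilde g_{-1}$, since $|\la|^{j+1}\ne 1$ for $j\ne -1$. This is exactly the coefficient computation the paper already used in the preceding solvability proposition (there at hyperbolic fixed points off $\La$, where it forces $g_{-1}=0$; here at the puncture, where it isolates the simple pole), so your argument is arguably the more economical continuation of that proof: it needs no covering-space or elliptic-curve input and it disposes of a possible essential singularity automatically, because the Laurent series of a holomorphic function on a punctured disc always exists. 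What the paper's torus argument buys is a coordinate-free explanation of why only the logarithmic term survives. Your two flagged verification points are the right ones, and both check out: an attracting germ in $\Diff(\bc,0)$ necessarily has multiplier of modulus strictly less than one (parabolic or indifferent germs are never attractors on a full neighborhood), so Koenigs applies; and the globalization is unproblematic because $\omega$ already exists on all of $W^*$ and meromorphy along $\La$ is a local property, propagated along the compact leaf by the local trivializations of the fibered tubular neighborhood.
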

\begin{proof} We may assume that $\La=L$ is a single
compact leaf of $\fa$. Choose a $\C^\infty$ tubular neighborhood
$\U$ of $\La$ fibered by holonomy holomorphic discs and we may
assume that $\U = W$; the extension is a local problem around
$\La$. Let $\pi\colon \widetilde W^* \to W^*$ be the universal
covering of $W^*$ with transversely holomorphic projection. Lift
$\omega$ to a $\Delta$-form $\tilde \omega = \pi^*(\omega)$ in
$\widetilde W^*$. Then $\tilde w$ is closed, transversely
holomorphic therefore $\tilde \omega = d\widetilde F$ for some
function $\widetilde F\colon \widetilde W^* \to \bc$ which is
transversely holomorphic.

\noindent Thus $\omega = dF$ for a {\it multivalued\/}
transversely holomorphic function $F$ in $W^*$ (notice that $F$ is
not actually a function in $W^*$). Given a point $p \in \La$ we
consider the corresponding disc $D_p \ni p$ and its punctured disc
$D_p^* = D_p-\{p\} \subset W^*$. We have $D_p^* \simeq \bd^* = \{z
\in \bc; 0 < |z| < 1\}$ by conformal equivalence. The restriction
$\pi|\_{\pi^{-1}(D_p^*)}\colon \pi^{-1}(D_p^*) \to D_p^*$ is
therefore a holomorphic covering of the punctured unit disc
$\bd^*$. We may therefore consider a holomorphic covering
$\Pi\colon \bd \to \bd^*$, a lift $\tilde f\colon \bd \to \bd$ of
the restriction to $\bd^*$ of the hyperbolic holonomy
diffeomorphism $f\colon \bd \to \bd$, \, $f(z) = \la z$ the
restrictions $\omega|\_{\bd^*}$ and $\tilde \omega|\_{\bd} =
d(F|_\bd)$.

\noindent The lift $\tilde f$ preserves the foliation
$\widetilde\fa = \pi^*(\fa)$ and therefore it satisfies
$\widetilde F(\tilde f(\tilde z)) = \widetilde F(\tilde z)$, \,
$\forall\,\tilde z \in \bd$. We have $\Pi(\tilde z) = e^{2\pi
i\tilde z} = z$ so that if $\la = e^{2\pi i\mu}$ then $\tilde
f(\tilde z) = \mu+\tilde z$ we may assume. Therefore $\widetilde
F(\tilde z)$ satisfies
$$
(\star)\quad
\begin{cases}
d\widetilde F(\tilde z+1) = d\widetilde F(\tilde z)\\
\widetilde F(\tilde z + \mu) = \widetilde F(\tilde z)
\end{cases}
$$
Since $|\la| \ne 1$ we have $\mu \in \re$ and therefore
$d\widetilde F(\tilde z)$ gives a holomorphic differential 1-form
in the complex 1-torus $\bc\big/(\bz\oplus\mu\bz)$, therefore
$d\widetilde F(\tilde z)$ must be linear, i.e.,

$d\widetilde F(\tilde z) = \al\,d\tilde z$ for some $\al \in
\bc-\{0\}$. This gives $\tilde \omega = \al\,\frac{d(\log z)}{2\pi
i} = \al(2\pi i)^{-1}\, \frac{dz}{z}$ and therefore $\omega(z) =
c\cdot\frac{dz}{z}$ for some constant $c \in \bc-\{0\}$.

\par This proves the extension of $\omega|\_{\bd^*}$ to $\bd$. By
Hartogs' Extension Theorem $\omega$ extends to a transversely
meromorphic 1-form on $W$ with polar set $(\omega)_\infty = \La$
of order one.

\end{proof}

Lemma~\ref{Lemma:extension} will be useful also in the $\mathbb
C$-affine case. For the moment we state a straightforward
consequence of the proof above:

\begin{Corollary}
If $\fa$ is transversely holomorphically additive in
$M\backslash\La$ as in Proposition\,\ref{Proposition:logarithmic}
and the leaf $L \subset \La$ contains some hyperbolic attractor as
its holonomy group then $\Hol(\fa,L)$ is abelian linearizable.
\end{Corollary}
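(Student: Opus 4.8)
The plan is to exploit the explicit normal form for $\omega$ obtained inside the proof of Lemma~\ref{Lemma:extension}. Since $L \subset \La$ carries a hyperbolic attractor in its holonomy group, that proof produces a transverse coordinate $z$ on a disc $D$ with $D \cap \La = \{0\}$ in which the extended meromorphic form reads $\omega|_D = c\,\frac{dz}{z}$ for some $c \in \bc - \{0\}$. I would take this logarithmic normal form as the starting point and show that it forces every holonomy germ of $L$ to be linear in the coordinate $z$.

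The key observation is that the holonomy group $\Hol(\fa, L)$ preserves $\omega$. Indeed, any germ $h \in \Hol(\fa, L)$ fixing $0$ is realized by transport along a path $\gamma \subset L$; restricted to the punctured disc $D^{*} = D - \{0\}$, the map $h|_{D^{*}}$ carries points lying on leaves of $\fa|_{M \setminus \La}$ to points on the same leaves, so it is a genuine element of the holonomy pseudogroup of $\fa$ on $M \setminus \La$. By construction that pseudogroup preserves $\omega$, hence $h^{*}\omega = \omega$ on $D^{*}$, and by analytic continuation (meromorphy of the extended $\omega$) we get $h^{*}\omega = \omega$ on all of $D$.

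Now I would simply integrate. Writing $h^{*}\!\left(c\,\frac{dz}{z}\right) = c\,\frac{h'(z)}{h(z)}\,dz = c\,\frac{dz}{z}$ gives $\frac{h'(z)}{h(z)} = \frac{1}{z}$, that is $\frac{d}{dz}\log h(z) = \frac{d}{dz}\log z$; integrating yields $h(z) = a\,z$ for some $a \in \bc^{*}$, the condition $h(0)=0$ forcing the integration constant to produce a genuine linear map. Hence every element of $\Hol(\fa, L)$ is linear in the coordinate $z$, so $\Hol(\fa, L) \subset \{\,z \mapsto a z : a \in \bc^{*}\,\}$, which is abelian, and this same coordinate $z$ linearizes the whole group simultaneously.

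The step I expect to require the most care is the invariance claim $h^{*}\omega = \omega$ for the holonomy of the boundary leaf $L$ itself, as opposed to the pseudogroup away from $\La$: one must argue that a holonomy germ of $L$ really does restrict to an element of the holonomy pseudogroup on $M \setminus \La$ over the punctured transversal $D^{*}$, and then invoke the analyticity of the extended $\omega$ to propagate the invariance across the fixed point $0$. Everything after that is the routine integration displayed above.
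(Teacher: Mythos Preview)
Your proposal is correct and is precisely the argument the paper has in mind: the corollary is stated as ``a straightforward consequence of the proof above'' without further details, and what you have written is exactly that straightforward consequence---take the normal form $\omega|_D = c\,\frac{dz}{z}$ produced in the proof of Lemma~\ref{Lemma:extension}, use that every holonomy germ of $L$ restricts on $D^*$ to an element of the holonomy pseudogroup of $\fa|_{M\setminus\La}$ (hence preserves $\omega$), and integrate to get linearity. Your flagged subtlety about passing the invariance from $D^*$ across the origin is handled exactly as you say, by meromorphy of the extended form.
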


\noindent Notice that this is not clear at first sight and shows
that an additive transverse holomorphic structure can
``degenerate'' into a multiplicative structure.

\section{The transversely affine case}

Let $\fa$ be a real codimension $2$  foliation on $M^{\ell+2}$. The foliation $\fa$ is {\it $\mathbb C$-transversely affine} if there exists a transversely holomorphic
atlas of submersions $y_j\colon U_j \to \bc$ for $\fa$ such that
if $U_i \cap U_j \ne \emptyset$ then $y_i = a_{ij}\,y_j + b_{ij}$
for locally constant maps $a_{ij}, \, b_{ij} \colon U_i \cap U_j
\to \bc$.

The following proposition is a characterization of the existence of
such structure in terms of differential forms.
\begin{Proposition}
\label{Proposition:forms}
The possible $\mathbb C$-affine
transverse structures for $\fa$ in $M$ are classified by the
collections $(\Om_j, \eta_j)$ of differential 1-forms defined in
the open sets $U_j \subset M$ such that:

{\rm(i)}\,\,$\Om_j$ and $\eta_j$ are transversely holomorphic,
$\Om_j$ is integrable and defines $\fa$ in $U_j$\,, \,\, $d\Om_j =
\eta_j \wedge \Om_j$ and $d\eta_j=0$ in $U_j$\,, if $U_i \cap U_j
\ne \emptyset$ then $\Om_i = g_{ij}\,\Om_j$ and $\eta_i = \eta_j +
\frac{dg_{ij}}{g_{ij}}$ for non-vanishing transversely holomorphic
function $g_{ij}\colon U_i \cap U_j \to \bc-\{0\}$.

{\rm(ii)} Two such collections $(\Om_j, \eta_j)$ and
$(\Om_j',\eta_j')$ define the same affine transverse structure for
$\fa$ in $M$ if and only if \, $\Om_j' = g_j\,\Om_j$ and $\eta_j' =
\eta_j + \frac{dg_j}{g_j}$ for some transversely holomorphic
non-vanishing functions $g_j\colon U_j \to \bc-\{0\}$.
\end{Proposition}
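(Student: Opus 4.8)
The plan is to prove the correspondence in both directions and then check the equivalence relation in (ii); the whole argument is the transversely holomorphic counterpart of the classical description of (singular) transversely affine holomorphic foliations, so it should transfer with only notational changes.

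First, starting from a given $\mathbb C$-affine transverse structure, i.e.\ from an atlas of transversely holomorphic submersions $y_j\colon U_j \to \bc$ defining $\fa$ with $y_i = a_{ij}\,y_j + b_{ij}$ on overlaps, I would simply set $\Om_j := dy_j$ and $\eta_j := 0$. Then $\Om_j$ is transversely holomorphic, integrable, defines $\fa|\_{U_j}$, is closed, and trivially satisfies $d\Om_j = 0 = \eta_j\wedge\Om_j$ and $d\eta_j = 0$. Since $a_{ij},b_{ij}$ are locally constant we get $dy_i = a_{ij}\,dy_j$, so $\Om_i = g_{ij}\,\Om_j$ with $g_{ij} = a_{ij}$ and $\eta_i = \eta_j + \tfrac{dg_{ij}}{g_{ij}}$ (both sides vanishing). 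This realizes the structure by a collection satisfying (i).

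The substantial direction is the converse: reconstructing an affine atlas from a collection $(\Om_j,\eta_j)$ obeying (i). The key device is an \emph{integrating factor}. After refining the cover so that each $U_j$ is a simply connected distinguished chart, the closed transversely holomorphic form $\eta_j$ integrates to a transversely holomorphic function, $\eta_j = dh_j$. A direct computation using $d\Om_j = \eta_j\wedge\Om_j$ gives $d(e^{-h_j}\Om_j) = e^{-h_j}\bigl(d\Om_j - dh_j\wedge\Om_j\bigr) = e^{-h_j}\bigl(\eta_j\wedge\Om_j - \eta_j\wedge\Om_j\bigr) = 0$, so $\widetilde\Om_j := e^{-h_j}\Om_j$ is a closed, nowhere-vanishing transversely holomorphic $1$-form defining $\fa$, hence (shrinking $U_j$) $\widetilde\Om_j = dy_j$ for a transversely holomorphic submersion $y_j$ that is a first integral of $\fa|\_{U_j}$. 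On an overlap the cocycle relations $\Om_i = g_{ij}\Om_j$ and $\eta_i = \eta_j + \tfrac{dg_{ij}}{g_{ij}}$ give $d\bigl(h_i - h_j - \log g_{ij}\bigr) = 0$, so $h_i = h_j + \log g_{ij} + c_{ij}$ with $c_{ij}$ locally constant; substituting yields $\widetilde\Om_i = e^{-c_{ij}}\widetilde\Om_j$, whence $dy_i = e^{-c_{ij}}\,dy_j$ and $y_i = a_{ij}\,y_j + b_{ij}$ with $a_{ij} = e^{-c_{ij}}$ locally constant. Thus $\{y_j\}$ is an affine atlas for $\fa$.

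For (ii), a gauge change $\Om_j' = g_j\,\Om_j$ forces $d\Om_j' = \bigl(\tfrac{dg_j}{g_j} + \eta_j\bigr)\wedge\Om_j'$, so the only compatible second form is $\eta_j' = \eta_j + \tfrac{dg_j}{g_j}$, which is again closed; moreover the integrating factors change by $h_j' = h_j + \log g_j$, leaving $\widetilde\Om_j' = e^{-h_j'}\Om_j' = e^{-h_j}\Om_j = \widetilde\Om_j$ and hence the first integrals $y_j$, i.e.\ the affine structure, unchanged. Conversely, two collections inducing the same structure define the same $\fa$ on each $U_j$ and therefore differ by a non-vanishing transversely holomorphic factor $g_j$, after which the structure equation forces exactly the stated relation between the $\eta_j$. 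I expect the main obstacle to be the careful bookkeeping in the converse: establishing the transverse Poincar\'e-type lemma that a closed transversely holomorphic $\eta_j$ integrates to a transversely holomorphic $h_j$ and that $e^{-h_j}\Om_j$ admits a transversely holomorphic first integral, and — above all — choosing the branches of $\log g_{ij}$ so that the residual constants $c_{ij}$ are genuinely locally constant. This last point is the heart of the matter, since it is precisely where the possibly non-constant transition functions $g_{ij}$ are absorbed into the integrating factors, producing honestly affine (constant-coefficient) transitions for the $y_j$.
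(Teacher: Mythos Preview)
Your argument is correct and follows the same integrating-factor route that the paper (through its reference to \cite{Scardua1}) has in mind; indeed you supply considerably more detail than the paper itself, which only writes out the forward implication of (i) and defers the rest. The one difference worth noting is in that forward direction: rather than your special choice $\Om_j = dy_j$, $\eta_j = 0$, the paper starts from an \emph{arbitrary} transversely holomorphic $\Om_j$ defining $\fa|_{U_j}$, writes $\Om_j = g_j\,dy_j$, and sets $\eta_j := dg_j/g_j$ --- a formulation with the mild practical advantage of showing directly that any pre-given $\Om$ (for instance a global one) can be completed to a compatible $\eta$, which is exactly how the proposition is invoked later. One small caution on your ``only if'' in (ii): the structure equation alone only gives $\eta_j' - \eta_j - dg_j/g_j = h\,\Om_j$ for some $h$; to conclude $h=0$ you must actually use that the two induced affine atlases agree (so that $y_j' = \alpha_j y_j + \beta_j$ with $\alpha_j$ constant, whence $e^{-h_j'}\Om_j' = \alpha_j e^{-h_j}\Om_j$ and the relation follows).
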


\begin{proof} First we prove (i); assume that $\fa$ is
$\mathbb C$-transversely affine with transversely holomorphic atlas
of submersions $y_j\colon U_j \to \bc$. Given any transversely
holomorphic non-singular 1-form $\Om_j$ defining $\fa$ in $U_j$ we
have $\Om_j = g_j\,dy_j$ for some transversely holomorphic function
$g_j\colon U_j \to \bc-\{0\}$ and we define $\eta_j =
\frac{dg_j}{g_j}\,\cdot$ If $U_i \cap U_j \ne \emptyset$ then $\Om_i
= g_{ij}\,\Om_j$ and $y_i = a_{ij}\,y_j + b_{ij}$ imply $dy_i =
a_{ij}\,dy_j$ and therefore $a_{ij}\,g_i = g_j\,g_{ij}$\,. Thus
$\frac{dg_i}{g_i} = \frac{dg_j}{g_j} + \frac{dg_{ij}}{g_{ij}}$ in
$U_i \cap U_j$\,. Clearly $d\eta_j=0$, \, $d\Om_j = \eta_j \wedge
\Om_j$ and $\eta_i = \eta_j + \frac{dg_{ij}}{g_{ij}}\,\cdot$ This
proves (i). Item (ii) is proved similarly and we refer to
\cite{Scardua1}.

\end{proof}

\subsection{Holonomy}
From now on in this section we consider the following situation.
We have $\fa$ a transversely holomorphic foliation on $M$, having a
$\mathbb C$-affine transverse structure in $M\setminus \Lambda$ for some compact analytic set of dimension one $\Lambda \subset M$.
We shall study  the holonomy groups of leaves $L
\subset \La$. We have seen that in the $\mathbb C$-additive case, the holonomy groups are solvable. A similar result holds for the $\mathbb C$-affine case. Nevertheless, the proof  requires more technical features as we
will see below.

\begin{Theorem}
\label{Theorem:holonomymonodromy} Let $\fa$ be a transversely
holomorphic foliation of codimension one on $M^{\ell +2}$. Assume
that $\fa$ has a $\mathbb C$-transversely affine foliation on
$M^{\ell + 2}\setminus \Lambda$ for a compact curve $\Lambda \subset
M$. Given any leaf $L \subset \La$ the holonomy group of $L$ is
solvable.
\end{Theorem}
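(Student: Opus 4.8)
The plan is to mimic the strategy used in the $\mathbb C$-additive case, but now accounting for the multiplicative cocycle $a_{ij}$ in the affine transition maps. By Proposition~\ref{Proposition:forms}, the $\mathbb C$-affine structure on $M\setminus\La$ is encoded by a collection $(\Om_j,\eta_j)$ with $d\Om_j=\eta_j\wedge\Om_j$, $d\eta_j=0$, and $\eta_i=\eta_j+\frac{dg_{ij}}{g_{ij}}$. Since the $\eta_j$ agree up to exact logarithmic differentials of the transition functions, they patch into a \emph{globally defined} closed transversely holomorphic $1$-form $\eta$ on $M\setminus\La$ (the $1$-form $\Om$ is global by hypothesis, coming from $\Om_{\fa}$, and then $\eta$ is determined by $d\Om=\eta\wedge\Om$ up to a multiple of $\Om$). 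This $\eta$ is the natural analogue of the invariant additive form $\omega$ from the previous section; the key structural fact is that $\eta$ is invariant under the holonomy pseudogroup of $\fa$ on $M\setminus\La$, whereas $\Om$ is only invariant up to the multiplier $a_{ij}$.

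The heart of the argument is again the density of hyperbolic fixed points. I would argue by contradiction: suppose $\Hol(\fa,L,D,p)\cong G<\Diff(\bc,0)$ is not solvable for some leaf $L\subset\La$. By the Nakai and Loray density theorems (\cite{Nakai},\cite{Loray2}), the set of points $q\in D$ that are hyperbolic fixed points of suitable holonomy germs $f_q\in G$ is dense in $D$. Near such a $q$ I linearize $f_q(z)=\la z$ with $|\la|\ne 1$, and I exploit the invariance of $\eta$ under $f_\ga$, exactly as in the additive Proposition. Writing $\eta|_V=h(z)\,dz$, the relation $f_\ga^*(\eta|_V)=\eta|_V$ forces, by the same Laurent-series computation, $\eta|_V=c\frac{dz}{z}$ for some constant $c$; and density of hyperbolic fixed points then forces the residue $c$ to vanish at a dense set of points, so that $\eta\equiv 0$ on $M\setminus\La$. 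The substance is then showing that $\eta\equiv 0$ reduces the $\mathbb C$-affine structure to a $\mathbb C$-additive one on $M\setminus\La$: when $\eta=0$ we recover $d\Om=0$, i.e. $\Om$ is a closed transversely holomorphic $1$-form, and the transition cocycle $a_{ij}$ becomes trivial, so the structure is in fact additive. I would then invoke the already-established solvability (indeed abelianity) of holonomy in the additive case to reach the contradiction.

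The main obstacle I anticipate is that, unlike the additive form $\omega$, the form $\Om$ carrying the affine structure is genuinely not holonomy-invariant, so I cannot apply the clean invariance computation directly to $\Om$. The whole argument hinges on extracting from the pair $(\Om,\eta)$ a genuinely invariant object, and the correct invariant object is $\eta$ together with the \emph{relative} information in $d\Om/\Om=\eta$. A delicate point is that $\eta$ is only well defined modulo the structure of the cocycle, so I must check carefully that the local expression $\eta|_V=h(z)\,dz$ transforms correctly under $f_\ga^*$ rather than picking up an extra multiplicative factor from the affine holonomy — in other words, that the holonomy indeed preserves $\eta$ on the nose. Once this invariance is secured, the Laurent-series argument and the density of hyperbolic fixed points close the proof by the same mechanism as before; so the technical weight lies in setting up the invariant form $\eta$ globally and verifying its holonomy invariance, rather than in any new dynamical input.
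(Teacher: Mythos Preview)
Your approach is correct and genuinely different from the paper's. The paper does not run the Nakai density argument again; instead it constructs the multiform developing map $F=\int \Om/e^{\int\eta}$ on $M\setminus\La$, observes that analytic continuation of $F$ along loops changes it by an affine map, and then relates the holonomy of $L\subset\La$ to this affine monodromy via three lemmas cited from \cite{Scardua2}: one obtains a surjection $\alpha\colon \Hol(\fa,L,D)\to\M(F;L)$ whose image sits in $\Aff(\bc)$ (hence is solvable) and whose kernel is abelian, and the exact sequence $0\to\Ker\alpha\to\Hol(\fa,L,D)\to\M(F;L)\to 0$ finishes the proof. Your route---identifying $\eta$ as the genuinely holonomy-invariant object, forcing $\eta\equiv 0$ on a punctured tubular neighbourhood of $L$ via the same Laurent computation as in the additive case, and then invoking the additive proposition---is more elementary and fully self-contained, avoiding the covering-space machinery.

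One caveat worth flagging: your argument needs a \emph{global} transversely holomorphic $\Om$ defining $\fa$, so that the equation $d\Om=\eta\wedge\Om$ determines a single closed form $\eta$ on $M\setminus\La$. If one works only with the local data $(\Om_j,\eta_j)=(dy_j,0)$ supplied by the affine atlas, then $\eta_j=0$ tautologically and no contradiction can be extracted. The paper's developing-map argument does not need this, since $F$ is essentially the multiform affine coordinate and exists directly from the atlas. In the paper's context (Theorem~A assumes a global $\Om$, and the paper's own proof of the present theorem uses $\Om$ and $\eta$ as global objects) this hypothesis is available, so your approach goes through there; just be aware that as stated the theorem does not literally supply it.
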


\begin{proof} Given a point $p \in M\backslash\La$ we may
consider a small open simply-connected neighborhood $U_p \ni p$ in
$M\backslash\La$ where we can write $\eta|\_{U_p} =
\frac{dg_p}{g_p}$ for some transversely holomorphic non-vanishing
function $g_p\colon U_p \to \bc-\{0\}$ and then $0 =
d\left(\frac{\Om}{e^{\int\eta}}\right) =
d\left(\frac{\Om}{g_p}\right)$ in $U_p$ implies $\Om = g_p\,dF_p$
for some transversely holomorphic function $F_p\colon U_p \to \bc$,
which is a local first integral for $\fa$ in $U_p$\,. If we choose
another $\bar p \in U_{\bar p} \subset M\backslash\La$ then in case
$U_p \cap U_{\bar p} \ne 0$ we have $F_{\bar p} = \al\,F_p + \be$
for some affine mapping $(z\mapsto \al z+\be)$. We introduce
therefore the multiform function $F$ on $M\backslash\La$ by writing
$F = \displaystyle\int \frac{\Om}{e^{\int\eta}}$ or also $dF =
\frac{\Om}{e^{\int\eta}}\,\cdot$ This is a locally well-defined
transversely holomorphic function $\left(F|\_{U_p} = F_p \text{ as
above}\right)$ which lifts to well-defined transversely holomorphic
function on the universal covering $\widetilde{M\backslash\La}$ of
$M\backslash\La$. Fixed a point $p \in M\backslash\La$ and any local
determination $F_p$ of $F$ and a path $\ga\colon [0,1] \to
M\backslash\La$ we may consider the transversely analytic
continuation $F_{p,\ga}$ of $F_p$ along $\ga$. If $\ga$ is closed,
$\ga(0) = \ga(1)$ then $F_{p,\ga}$ depends only on the homotopy
class $[\ga] \in \pi_1(M\backslash\La;p)$. Let $A(F)$ given by $A(F)
:= \{F_{p,[\ga]}; [\ga] \in \pi_1(M\backslash\La;p)\}$ be the set of
such transversely analytic continuations and $P^{-1}(p) =
\{\text{determinations } F_p \text{ on } F \text{ at } p\}$. Then
$\pi_1(M\backslash\La;p)$ acts transitively on $P^{-1}(p)$. There
exists therefore a regular covering $A(F)
\underset{P}{\longrightarrow} M\backslash\La$ with total space
$A(F)$, fiber over $p$ equal to $P^{-1}(p)$ and covering
automorphisms group isomorphic to
$\pi_1(M\backslash\La;p)\big/P_\#(\pi_1(A(F),F_p) =: \M(F)$. We call
$\M(F)$ the {\it monodromy group} of $F$.

\noindent The canonical projection $\mu\colon
\pi_1(M\backslash\La;p) \to \M(F)$ is called the {\it monodromy
map\/} of $F$ and associates to each homotopy class $[\ga] \in
\pi_1(M\backslash\La/p)$ the corresponding determination
$F_{p,[\ga]}$ at the point $\ga(1) = p$.

Given any leaf $L_0 \subset \La$ we fix a point $p_0 \in L_0$ and
a small holomorphic holonomy disc $D_0$\,, \, $D_o \pitchfork
\fa$, \, $D_0 \cap L_0 = \{p_0\}$ so that we have a holonomy
representation $\Hol(\fa, L_0, D_0, p_0) \subseteq
\Diff(D_0;p_0)$. Given a $C^\infty$ tubular neighborhood $r\colon
N \to L_0$ fibered by holomorphic holonomy discs $D_q =
r^{-1}(q)$, \, $q \in L_0$ with $r^{-1}(p_0) = D_0$ and $N^* =
N\backslash L = N\backslash\La$ we obtain by restriction a
$C^\infty$ fibration $r^*\colon N^* \to L_0$ fiber a punctured
disc $\bd^* = \bd-\{0\}$. The homotopy sequence for this fibration
gives the exact sequence below
$$
\begin{matrix}
0 &\longrightarrow &\bz &\longrightarrow& \pi_1(N^*,\tilde p_0) \longrightarrow \pi_1(L_0;p) \to 0\\
&\searrow &\pi_1(\bd^*) &\nearrow&
\end{matrix}
$$
where $\tilde p_0 \in N^*$ is a fixed base point with $r(\tilde
p_0)=p_0$. Denote by $A(F)|\_{N^*}
\underset{P|_{N^*}}{\longrightarrow} N^*$ the natural restriction
of $A(F) \underset{P}{\longrightarrow} M\backslash\La$ and by
$A(F)|\_{N^*}$ the connected component which contains the local
determination $F_{\tilde p_0}$\,.  As above the monodromy map we
denote by
$$
\mu\colon \pi_1(N^*,\tilde p_0) \to \M(F)(N^*) \cong
\frac{\pi_1(N^*,\tilde p_0)}{P_\#(\pi_1(A(F)|\_{N^*};F_{\tilde
p_0})}\,\cdot
$$
We shall use the following lemma

\begin{Lemma}[\cite{Scardua2}] There exists a unique morphism $(\mu)$ which
makes commutative the following diagram
$$
\begin{matrix}
0 \longrightarrow \bz \longrightarrow &\pi_1(N^*,\tilde p_0)& \longrightarrow &\pi_1(L_0,p_0)& \to 0\\
&\downarrow \mu&  &\downarrow(\mu)&\\
&\M(F;N^*)& \longrightarrow &\frac{\M(F;N^*)}{\bz}& \to 0.
\end{matrix}
$$
\end{Lemma}

We finally define $\mu(F;L_0) := \frac{\M(F;N^*)}{\bz}$ as the
monodromy of $F$ associated to $L_0$ and call the morphism
$(\mu)\colon \pi_1(L_0;p) \to \M(F; L_0)$ the monodromy mapping of
$F$ relatively to $L_0$\,. Now we use

\begin{Lemma}[\cite{Scardua2}]
There exists a surjective morphism
$\al$ which makes commutative the diagram
$$
\begin{matrix}
&\pi_1(L_0;p_0)& \\
\Hol  \swarrow & & \searrow (\mu)\\
\Hol(\fa,L_0,D_0) &\overset{\al}{\longrightarrow}& \M(F;L_0)
\end{matrix}
$$
\end{Lemma}

\noindent Given any local determination $\ell(y)$ of
$F|\_{D_0^*}$\,, where $y$ is a holomorphic coordinate on $D_0$\,,
we denote by $\ell(y)_{[\ga]}$ its analytic continuation along
$[\ga] \in \pi_1(L_0;p_0)$ so that we have an element
$\ell(y)_{[\ga]} = (\al\circ\Hol)([\ga])$ of the monodromy group
$\M(F;L_0)$. Also we need:

\begin{Lemma}[\cite{Scardua2}]
For each $[\ga] \in \pi_1(L_0,p_0)$ we have $\ell(y)_{[\ga]} =
a_{[\ga]} y + b_{[\ga]}$ for some affine mapping $(z \mapsto
a_{[\ga]}z + b_{[\ga]})$. In particular $\M(F_0;L_0)$ is solvable.
\end{Lemma}
Since the exact sequence $0 \to \Ker \al \to \Hol(\fa,L_0,D_0)
\overset{\al}{\longrightarrow} \M(F;D_0^*) \to 0$ has
$\M(F;D_0^*)$ solvable and $\Ker(\al)$ abelian we conclude that
$\Hol(\fa,L_0,D_0)$ is solvable.

\end{proof}

\subsection{Extension}
We keep on considering the situation of the previous paragraph.
Moreover, let $\fa$ be  given in $M$ by a
transversely holomorphic integrable $1$-form $\Om$.
Our first step is the following:

\begin{Proposition}
Let $L \subset \La$ a leaf whose holonomy group contains a
hyperbolic attractor. Then we may find a fibered neighborhood $W$
of $L$ by holonomy discs $D$ as above and a transversely
meromorphic $1$-form $\eta_L$ in $W$ such that:

\noindent{\rm(i)} $\eta_L|\_D$ is a meromorphic $1$-form writing as
$\eta_L|\_D(y) = \frac{ady}{y} + \frac{dg}{g}$\, $a \in \bc$ in a
suitable holonomy holomorphic coordinate $y$ in $D$ for which $\Om =
gdy$.

\noindent{\rm(ii)} If $L$ has abelian holonomy we may take $a=0$
and if $L$ is non abelian $a=k+1$ for the imbedding of
$\Hol(\fa,L,D)$ in $\bh_k$\,.
\end{Proposition}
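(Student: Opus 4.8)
The plan is to extend the companion form of the affine structure across $\La$ by combining the solvability of the holonomy from Theorem~\ref{Theorem:holonomymonodromy} with the classification of solvable germ groups and with the affine developing map. First I would reduce to the case $\La=L$ a single compact leaf and fix a $C^\infty$ tubular neighborhood $W$ of $L$ fibered by holonomy discs $D_q=r^{-1}(q)$, with a holomorphic transverse coordinate $y$ satisfying $\{y=0\}=L$, exactly as in Lemma~\ref{Lemma:extension}. On $N^*:=W\setminus L$ the affine structure is encoded by a pair $(\Om,\eta)$ as in Proposition~\ref{Proposition:forms}, where $\eta$ is the closed transversely holomorphic companion form with $d\Om=\eta\wedge\Om$; in an affine submersion $y_j$ one has $\Om=g_j\,dy_j$ and $\eta=\frac{dg_j}{g_j}$. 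The goal then becomes to show that $\eta$ extends transversely meromorphically across $L$ and to compute its residue.

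The key input is the structure of $G:=\Hol(\fa,L,D)$. By Theorem~\ref{Theorem:holonomymonodromy} it is solvable, and by hypothesis it contains a hyperbolic attractor. Invoking the classification of solvable subgroups of $\Diff(\bc,0)$ containing a hyperbolic element, $G$ is, in a suitable holonomy coordinate $y$, either \emph{(a)} abelian and linear, $y\mapsto\la y$, or \emph{(b)} analytically conjugate to a subgroup of $\bh_k$ for a unique integer $k\ge 1$, i.e.\ to the symmetries of $y^{k+1}\,\po/\po y$. The bridge to the forms is the (multivalued) developing map $F:=\int \Om/e^{\int\eta}$ built in the proof of Theorem~\ref{Theorem:holonomymonodromy}: it conjugates each holonomy germ to its affine monodromy, $F\circ f=a_f F+b_f$. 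Since the affinizing coordinate of a normal-form group is unique up to affine maps, $F|_{D}$ is forced: $F=y$ in case \emph{(a)}, while in case \emph{(b)} the $k$-th power map conjugates $\bh_k$ to the Möbius group fixing the origin, so $w\mapsto 1/w$ yields $F=y^{-k}$ (indeed, for $\phi_{\al,\be}(y)=\al y(1+\be y^k)^{-1/k}$ one checks $F\circ\phi_{\al,\be}=\al^{-k}F+\be\,\al^{-k}$).

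With $F$ determined the computation is short. Since $\Om$ is transversely holomorphic on all of $M$, writing $\Om|_D=g\,dy$ gives $g$ holomorphic in $y$ (possibly vanishing at $y=0$ if $\Om$ degenerates along $L$). From $\Om=g_j\,dF$ and $g=g_jF'$ one gets
\[
\eta|_D=\frac{dg_j}{g_j}=\frac{dg}{g}-d(\log F').
\]
In case \emph{(a)} $F'=1$, so $\eta|_D=\frac{dg}{g}$ and $a=0$; in case \emph{(b)} $F'=-k\,y^{-k-1}$, so $d(\log F')=-(k+1)\frac{dy}{y}$ and $\eta|_D=\frac{(k+1)\,dy}{y}+\frac{dg}{g}$, i.e.\ $a=k+1$. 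In either case the right-hand side is transversely meromorphic with only simple poles, since $g$ is holomorphic, so $\eta_L:=\frac{a\,dy}{y}+\frac{dg}{g}$ is the desired form on each disc and agrees with $\eta$ on $N^*$.

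Finally I would globalize. As $a$ is an integer it is locally constant, hence constant along the connected leaf $L$, and $\eta_L$ coincides with the transversely holomorphic $\eta$ on $N^*=W\setminus L$; because the meromorphic extension across the transverse divisor $L$ is unique, the disc-wise forms patch to a single transversely meromorphic $\eta_L$ on $W$, by the same Hartogs-type argument that closes Lemma~\ref{Lemma:extension}. I expect the main obstacle to be the content of the middle steps, namely rigorously pinning the index $a$: one must justify the dichotomy abelian-linear versus $\bh_k$ for the solvable group $G$, show that the developing map restricted to $D$ is \emph{exactly} $y$ (ruling out residual branching in the abelian case) respectively $y^{-k}$, and thereby read off $a=k+1$ from the order of contact $k$ built into $\bh_k$. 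The remaining ingredients—the holomorphy of $g$, the identity $\eta|_D=\frac{dg}{g}-d(\log F')$, and the gluing over $L$—are routine.
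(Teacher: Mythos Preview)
Your route differs from the paper's and in fact merges this proposition with the next one (Proposition~\ref{Proposition:extension}). The paper does \emph{not} touch the affine companion form $\eta$ or the developing map $F$ here. It argues purely from the holonomy: solvability (Theorem~\ref{Theorem:holonomymonodromy}) together with a hyperbolic element yields, via the projectively invariant convergent vector field of \cite{Cerveau-Moussu,Loray}, the dichotomy ``abelian linearizable'' versus ``analytic embedding in $\bh_k$''. One then covers a neighborhood of $L$ by normal-form charts $(x,z)$ with $\fa:\{dz=0\}$ and simply \emph{defines} $\eta_L$ chartwise --- as $\frac{dg}{g}$ with $\Om=g\,\frac{dz}{z}$ in the abelian case, and as $(k{+}1)\frac{dz}{z}+\frac{dg}{g}$ with $\Om=g\,dz$ in the $\bh_k$ case --- and checks coherence on overlaps by a one-line computation with the transition $\tilde z=\la z(1+\mu z^k)^{-1/k}$. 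The comparison with the affine $\eta$ is deferred to Proposition~\ref{Proposition:extension}, whose proof uses the $\eta_L$ just built.

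The genuine gap in your argument is the abelian step. Your assertion that $F|_D=y$ up to an affine map is false in general: when the holonomy is the linear group $\{y\mapsto\lambda y\}$, \emph{every} $F(y)=y^m$ (any $m\in\bz\setminus\{0\}$) and also $F(y)=\log y$ conjugates it to an affine action, so the ``affinizing coordinate'' is highly non-unique in the abelian case. Which $F$ actually arises is dictated by the affine structure on $M\setminus\La$, not by $\Hol(\fa,L)$, and accordingly the residue of the affine $\eta$ along $L$ can be any complex number --- this is exactly why Proposition~\ref{Proposition:extension} only concludes ``abelian linearizable'' when $\Res_L\eta\notin\{2,3,\dots\}$ rather than pinning the residue to $0$, and why its proof finds $\eta=\eta_L+\text{const}\cdot\xi$ rather than $\eta=\eta_L$. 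So your construction, even if the extension of $\eta$ were granted, need not produce $a=0$ and therefore does not establish~(ii). In the non-abelian case your identification $F=y^{-k}$ is correct and can be justified (non-abelian solvable groups are rigid enough), but the paper's overlap computation gets there with less machinery.
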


\begin{proof}
We already know from Theorem~\ref{Theorem:holonomymonodromy} that
the holonomy group of each leaf $L\subset\Lambda$ is solvable. Given
a point $p \in L$ consider a transverse disc $\Sigma\subset M$ to
$\fa$ with $\Sigma \cap \Lambda = \Sigma \cap L =\{p\}$. We put
$G=\Hol(\fa,L,\Sigma,p)$. By means of a local holomorphic coordinate
$z \in \Sigma$ we may consider $G$ as a subgroup of $\Diff(\mathbb
C,0)$.
\begin{Claim}
There is a germ of a holomorphic vector field  $\mathcal X(z)$ in
$\Sigma$ such that  for any $g \in G$ we have $g_*\mathcal X =
c_g\mathcal X$ for some constant $c_g \in \bc^* = \bc-\{0\}$.

\end{Claim}
Indeed, it is  well-known that a solvable  subgroup $G <
\Diff(\bc,0)$ admits such a formal vector field which is
projectively invariant. Moreover, this vector field is is convergent
in the case the group contains some nonresonant (hyperbolic for
instance) map \cite{Cerveau-Moussu}, \cite{Loray}). Now we may
write, up to an analytic change of coordinates in $\Sigma$,
$\mathcal X(z) = \frac{z^{k+1}}{1+a z^k}\,\frac{d}{d z}$. For any $g
\in G$ we have $g_*(\frac{z^{k+1}}{1+a z^k}\,\frac{d}{d z}) =
c_g(\frac{z^{k+1}}{1+a z^k}\,\frac{d}{d z})$ for some constant $c_g
\in \bc^* = \bc-\{0\}$. We have two cases to consider: \vglue.1in
\noindent{\bf $G$ is abelian}. In this case, because it contains a
hyerbolic (analytically linearizable) map, the group $G$ is
analytically linearizable. We may therefore construct a closed
meromorphic one-form $\omega$ with simple poles,
$(\omega)_\infty=L$, in a neighborhood $W$ of $L$ in $M$. The form
$\omega$ is given in any linearizing transverse coordinate $z$ by
$\omega(z) = \frac{dz}{z}$. In this case we have $\Omega = g \omega$
for some transversely meromorphic one-form $g$ in $W$. Since
$\Omega/g$ is closed, we have that $\eta_L:=\frac{dg}{g}$ satisfies
$d\Omega= \eta \wedge \Omega$.

\vglue.1in

\noindent{\bf  $G$ is solvable non-abelian}. In this case there
exists $g \in G$ with $c_g \ne 1$ and therefore $a=0$, i.e., $\chi =
z^{k+1}\,\frac{d}{d z}$ giving an analytic embedding $G
\hookrightarrow \bh_k= \{(z \mapsto \frac{\la z}{\sqrt[k]{1+\mu
z^k}})\}$\,. In this case we cover a neighborhood of $L\subset M$ by
local charts $(x,z)\in \mathbb R^\ell \times \mathbb C$ where $\fa$
is given by $dz=0$, and the coordinate $z$ gives the embedding of
the holonomy group as a subgroup of $\bh_k$. If we write on each
chart $\Omega(x,z)= gdz$ then we put $\eta(x,z):=\frac{dg}{g} +
(k+1) \frac{dz}{z}$. On each intersection of two such coordinate
charts $(x,z)$ and $(\tilde x, \tilde z)$, we have: (i)
$\Omega=gdz=\tilde g d \tilde z$ and  (ii) $\tilde z = \frac{\la
z}{\sqrt[k]{1+\mu z^k}}$.

Taking derivatives in the second equation we have
$\frac{d\tilde z}{{\tilde z}^{k+1}}= \frac{dz}{ \lambda^k z^{k+1}}$.
Replacing this in the first equation we conclude that $\tilde g {\tilde z}^{k+1}= \lambda ^k g z^{k+1}$. This implies that
$d \ln \tilde g {\tilde z}^{k+1}= d \ln g z^{k+1}$, i.e.,
$(k+1) \frac{d \tilde z}{\tilde z} + \frac{d \tilde g}{\tilde g} =
(k+1) \frac{dz}{z} + \frac{dg}{g}$. This defines the 1-form $\eta_L$ by $\eta_L(x,z):=(k+1) \frac{dz}{z} + \frac{dg}{g}$ on each coordinate system. This form satisfies the equation $d\Omega= \eta_L \wedge \Omega$.

\end{proof}

Then we prove the following extension result:

\begin{Proposition}
\label{Proposition:extension}
 Let $\fa$ be $\mathbb C$-transversely affine on $M\backslash\La$ and given in $M$ by  $\Om$. Suppose that
each leaf $L \subset \La$ contains an attractor on its holonomy
group then there is a closed transversely meromorphic $1$-form
$\eta$ in $M$ with polar set $(\eta)_\infty = \La$ of order one
and such that $d\Om = \eta \wedge \Om$. Moreover for any leaf $L
\subset \La$ we have:
\begin{itemize}
\item[{\rm (i)}] If $\Res_L\eta = a \notin \{2,3,4,\dots\}$
then $\Hol(\fa,L)$ is abelian linearizable.
\item[{\rm (ii)}] If $\Hol(\fa,L)$ is not abelian linearizable
then $\Res_L\eta = k+1$ with $k \in \bn$ and $\Hol(\fa,L)$ embeds
analytically into $\bh_k = \{(z \mapsto \frac{\la z}{\sqrt[k]{1+\mu
z^k}})\}$.
\end{itemize}
\end{Proposition}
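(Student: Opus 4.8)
The plan is to reduce the statement to a local extension problem along each leaf of $\La$ and to glue the locally constructed normal forms with a globally defined affine form on $M\setminus\La$. First I would record that on $M\setminus\La$ the $\bc$-affine structure together with the global defining form $\Om$ produces, via Proposition~\ref{Proposition:forms}, a single closed transversely holomorphic $1$-form $\eta$ with $d\Om=\eta\wedge\Om$: taking $\Om_j=\Om|\_{U_j}$ makes the multiplicative cocycle $g_{ij}$ trivial, so the local pieces $\eta_j=\frac{dg_j}{g_j}$ satisfy $\eta_i=\eta_j$ on overlaps (the $a_{ij}$ being locally constant) and hence glue to a well-defined closed $\eta$ on $M\setminus\La$. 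The whole task is then to show that this $\eta$ extends meromorphically across $\La$ with simple poles and $(\eta)_\infty=\La$.

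Since extension is a local question and $\La$ is compact, I would cover it by finitely many tubular neighborhoods, one around each leaf $L\subset\La$, and treat each $L$ separately. By hypothesis $\Hol(\fa,L)$ contains a hyperbolic attractor, and by Theorem~\ref{Theorem:holonomymonodromy} it is solvable; the preceding proposition therefore furnishes a fibered neighborhood $W$ of $L$ and a closed transversely meromorphic $1$-form $\eta_L$ on $W$ with $d\Om=\eta_L\wedge\Om$, whose restriction to a holonomy disc is $\frac{a\,dy}{y}+\frac{dg}{g}$ in a normalizing coordinate $y$ with $\Om=g\,dy$. In particular $\eta_L$ has a simple pole along $L$ with residue $a$.

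To promote these local extensions to a global one I would compare $\eta$ and $\eta_L$ on $W\setminus L$. Both are closed and satisfy $d\Om=(\,\cdot\,)\wedge\Om$, so $\theta:=\eta-\eta_L$ is closed, transversely holomorphic, and satisfies $\theta\wedge\Om=0$; hence $\theta=h\,\Om$ for a transversely holomorphic $h$ on $W\setminus L$ with $h\,\Om$ closed. The point to verify is that $\theta=0$, i.e. that the affine-canonical form $\eta=\frac{dg_j}{g_j}$ already coincides with the normal-form extension $\eta_L$. I would do this by writing the affine developing data in the single holonomy disc in the normalizing coordinate and matching $\frac{dg_j}{g_j}$ against $\frac{a\,dy}{y}+\frac{dg}{g}$ term by term, which simultaneously identifies the residue $\Res_L\eta$ with the invariant $a$ of the preceding proposition. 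Gluing the $\eta_L$ over the finitely many leaves of $\La$ with $\eta$ on $M\setminus\La$ then yields the asserted global closed transversely meromorphic $\eta$ on $M$ with $(\eta)_\infty=\La$ of order one and $d\Om=\eta\wedge\Om$.

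Finally, the dichotomy (i)--(ii) is read off from the two cases of the preceding proposition. When $\Hol(\fa,L)$ is abelian, the presence of a hyperbolic attractor makes it analytically linearizable and the normal form gives a residue outside $\{2,3,4,\dots\}$, so that $\Res_L\eta=a\notin\{2,3,4,\dots\}$ forces this abelian linearizable case; when $\Hol(\fa,L)$ is solvable but non-abelian, the projectively invariant field $z^{k+1}\frac{d}{dz}$ embeds the group into $\bh_k$ and pins the residue to $\Res_L\eta=k+1$ with $k\in\bn$. I expect the genuine obstacle to be the middle step: showing that the globally defined affine form $\eta$ on $M\setminus\La$ agrees with (and not merely is cohomologous to) the locally constructed $\eta_L$, i.e. ruling out a nonzero closed correction $h\,\Om$ while keeping the poles simple. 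This is precisely the place where transverse holomorphicity, the simple-pole normalization, and the exact identification of the residue with the holonomy invariant must all be controlled at once.
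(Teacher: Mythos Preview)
Your overall architecture matches the paper's: obtain $\eta$ on $M\setminus\La$ from Proposition~\ref{Proposition:forms}, produce $\eta_L$ near each leaf from the preceding proposition, and study the difference $\theta=\eta-\eta_L=h\,\Om$, which is closed on $W\setminus L$. But the step you identify as the crux and then propose to resolve by ``matching term by term'' to conclude $\theta=0$ is wrong, and this is a genuine gap. In general $\theta\neq 0$: when $\Hol(\fa,L)$ is abelian linearizable the preceding proposition gives $a=0$, so $\eta_L|\_D=\frac{dg}{g}$, whereas $\eta|\_D$ may perfectly well differ from this by a nonzero constant multiple of $\frac{dz}{z}$. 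There is no term-by-term identity to be had, and in particular the residue $\Res_L\eta$ is \emph{not} forced to equal the invariant $a$ of the preceding proposition.

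The paper's argument instead splits into two cases according to whether $\Hol(\fa,L)$ is abelian or (solvable) non-abelian. In the non-abelian case there is no closed transversely meromorphic $1$-form defining $\fa$ in $W$, so $h\,\Om\equiv 0$ and indeed $\eta=\eta_L$; this pins $\Res_L\eta=k+1$. In the abelian case, linearizability plus the presence of a hyperbolic element implies that the closed meromorphic form $\xi$ with $\xi|\_D=\frac{dz}{z}$ is unique up to a constant (no nonconstant meromorphic first integral exists near $L$), so $h\,\Om=c\,\xi$ for some $c\in\bc$ and $\eta=\eta_L+c\,\xi$ extends meromorphically with a simple pole; here $\Res_L\eta=c$ is an arbitrary constant, not determined by the holonomy alone. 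The dichotomy (i)--(ii) then follows by contraposition: non-abelian forces residue $k+1\in\{2,3,\dots\}$, so residue outside $\{2,3,\dots\}$ forces abelian linearizable. Your reading that the abelian case ``gives a residue outside $\{2,3,4,\dots\}$'' is the wrong direction of implication and does not hold.
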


\begin{proof}
The existence of a $\mathbb C$-affine  structure in $M\setminus
\Lambda$ gives us via Proposition~\ref{Proposition:forms}  a form
$\eta$ defined in $M \setminus \Lambda$ with the properties
announced. The problem is to show the extension of $\eta$ to
$\Lambda$. For this sake we consider the restriction of $\eta$ to
$W\backslash\La = W\backslash L =: W^*$. In $W^*$ we have $d\Om =
\eta \wedge \Om = \eta_L \wedge \Om \Rightarrow \eta-\eta_L =  h
\cdot\Om$ for some transversely meromorphic function $h$ in $W^*$
such that $d(h\Om)=0$.

\noindent If $\eta-\eta_L \not\equiv 0$ then $\Om|\_{W^*}$
admits $ h$ as an integrating factor. Let us prove the extension of  $\eta$
to $L$.  This is done  as follows:

 Now we have two possibilities:

\noindent{\bf $1^{\text{st}}$.} \,\, $\Hol(\fa,L)$ is abelian. In
this case, since it contains a linearizable attractor, $\Hol(\fa,L)$
is abelian linearizable and $\fa$ is given by a closed transversely
meromorphic 1-form $\xi$ in a fibered neighborhood $W$ of $L$ in $M$
as above, moreover $\xi|\_D$ writes as $\xi|\_D(z) = \frac{dz}{z}$
in suitable holonomy holomorphic coordinates $z$ in $D$. Since,
because $\Hol(\fa,L)$ contains a hyperbolic element, there exists no
meromorphic first integral for $\fa$ in $W$ we must have $\xi$
unique, up to multiplicative constants. Therefore $\omega(z) =
\vr_{-1}\,\frac{dz}{z}$ and $\eta|\_D(z) = \eta_L|\_D(z) +
\vr_{-1}\,\frac{dz}{z} = \eta_L|\_D(z) + \vr_{-1}\,\xi|\_D(z)$.
Therefore, $\eta = \eta_L + \text{ cte. } \xi$ in $W$.

\vglue.1in

\noindent{\bf $2^{\text{nd}}$.}\,\, $\Hol(\fa,L)$ is solvable but
not abelian. In this case there exists a holomorphic imbedding
$\Hol(\fa,L,D) \hookrightarrow \bh_k$ for a unique $k \in \bn$ and
$\fa|\_W$ cannot be given by a closed transversely meromorphic
1-form. Therefore $\omega\equiv 0$ and $\eta = \eta_L$ in $W$.

\end{proof}

\section{Proof of Theorem~\ref{Theorem:sheaf}}

 Let $\fa$ be $\mathbb C$-transversely affine on $M\backslash\La$ and given in $M$ by  $\Om$. Suppose that
each leaf $L \subset \La$ contains an attractor on its holonomy
group. By Proposition~\ref{Proposition:extension}  there is a closed transversely meromorphic $1$-form
$\eta$ in $M$ with polar set $(\eta)_\infty = \La$ of order one
and such that $d\Om = \eta \wedge \Om$. Now we consider the  special case where  $\La=L$ is a
single leaf of $\fa$.
Let $a \in \bc$ denote the residue
$\Res_L\,\eta$.
Let us prove that the holonomy group  $\Hol(\fa,L)$ is  abelian, which implies Theorem~\ref{Theorem:sheaf}.
If this is not the case we have  $a=k+1$ for some $k \in \bn$.
This case is dealt with via the following lemma:
\begin{Lemma}
\label{Lemma:logarithmic} Suppose that any closed transversely
holomorphic $1$-form on $M$ is exact and $\La = L = \{f=0\}$ for
some transversely holomorphic function $f\colon M \to \bc$. Then
$\fa$ is given in a neighborhood of $\La$ by a closed transversely
meromorphic $1$-form with simple poles $\omega$, defined in a
neighborhood $U \supset \La$ of $\La$ in $M$.
\end{Lemma}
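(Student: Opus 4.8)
The plan is to trivialize the connection form $\eta$ furnished by Proposition~\ref{Proposition:extension} and then divide $\Om$ by the resulting integrating factor. Recall that Proposition~\ref{Proposition:extension} provides a closed transversely meromorphic $1$-form $\eta$ on $M$, with simple poles along $\La$ and residue $a=\Res_L\eta$, satisfying $d\Om=\eta\wedge\Om$. Since by hypothesis $\La=\{f=0\}$ for a transversely holomorphic $f\colon M\to\bc$, the form $\frac{df}{f}$ is closed transversely meromorphic with a simple pole of residue one along $\La$; hence $\theta:=\eta-a\,\frac{df}{f}$ has vanishing residue along $\La$, its apparent simple pole is removable, and $\theta$ is a closed transversely holomorphic $1$-form on all of $M$.

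First I would invoke the hypothesis that every closed transversely holomorphic $1$-form on $M$ is exact to write $\theta=dH$ for some transversely holomorphic $H\colon M\to\bc$. Then $\eta=a\,\frac{df}{f}+dH=\frac{dG}{G}$ with $G:=f^{a}e^{H}$ a single-valued transversely holomorphic function vanishing along $\La$. Setting $\omega:=\Om/G$, the identity $d\omega=\frac1G\bigl(d\Om-\tfrac{dG}{G}\wedge\Om\bigr)=\frac1G\bigl(d\Om-\eta\wedge\Om\bigr)=0$ shows that $\omega$ is a closed transversely meromorphic $1$-form defining $\fa$ in a neighborhood $U$ of $\La$, with polar set $\La$.

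The delicate point---and the step I expect to be the main obstacle---is to upgrade the poles of $\omega$ to \emph{simple} poles. By construction $G$ vanishes to order $a$ along $\La$, so a priori $\omega$ has a pole of order $a$, and $a=k+1\ge 2$ is precisely the non-abelian alternative we wish to exclude. Here the attractor hypothesis is decisive. Being closed, $\omega$ renders $\fa$ transversely additive off $\La$: on a transverse disc a local primitive $\int\omega$ is a first integral and the holonomy acts by translations in that coordinate, so $\Hol(\fa,L)$ is abelian. Reading these translations back in the original holonomy coordinate, a pole of order $a\ge 2$ yields holonomy maps tangent to the identity---the unipotent part of $\bh_{a-1}$---which contains no hyperbolic element and hence no attractor, contradicting the standing assumption that $\Hol(\fa,L)$ contains an attractor and forcing $a=1$. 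Equivalently, one may argue purely formally: the existence of the closed transversely meromorphic defining form $\omega$ is incompatible with the non-abelian case of Proposition~\ref{Proposition:extension}, so $\Hol(\fa,L)$ must be abelian linearizable and $\fa$ logarithmic along $\La$. Either way $\omega$ has a simple pole along $\La$, which is the assertion of the lemma.
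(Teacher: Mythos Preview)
Your approach coincides with the paper's through the main steps: invoke $\eta$ from Proposition~\ref{Proposition:extension}, subtract $a\,\frac{df}{f}$, use the exactness hypothesis to write $\eta=a\,\frac{df}{f}+dH$, and form the closed transversely meromorphic one-form $\Om/(f^{a}e^{H})$ defining $\fa$. The paper (writing $a=k+1$, since the lemma is invoked under the non-abelian alternative) draws the same conclusion you do: the existence of such a closed defining form forces $\Hol(\fa,L)$ to be abelian, hence abelian linearizable given the attractor.

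Where you and the paper part ways is the final step. The paper does \emph{not} claim that $\Om/(f^{k+1}e^{H})$ has simple poles; its pole order is $k+1$, and the residue $a$ is fixed by $\eta$. Instead, once the holonomy is known to be abelian linearizable, the paper constructs a \emph{new} closed one-form: on each linearizing transverse coordinate $z$ one sets $\frac{dz}{z}$ and extends constantly along the leaves, obtaining the desired simple-pole form in a neighborhood of $\La$. Your closing sentence ``either way $\omega$ has a simple pole'' misidentifies the object: the simple-pole form is this $dz/z$-form, not $\Om/G$, and you do not supply its construction --- you only assert ``$\fa$ logarithmic along $\La$''.

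Your route (A), that an attractor is incompatible with a closed defining form having a pole of order $\ge 2$, is a valid observation (though ``tangent to the identity'' overstates it: the linear parts of holonomy maps preserving such a form are $(a-1)$-st roots of unity, which already suffices to exclude hyperbolicity). But $G=f^{a}e^{H}$ is single-valued only for $a\in\bz$, so this argument shows at best that $a\notin\{2,3,\dots\}$ among positive integers; it does not force $a=1$ unconditionally. For non-integer $a$ your $\omega$ is simply undefined, and one falls back on Proposition~\ref{Proposition:extension}(i), which again gives abelian linearizable holonomy but still leaves the simple-pole form to be produced. In every case the missing ingredient is the explicit $dz/z$-construction from the linearizing coordinates.
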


\begin{proof} We shall first consider the following simpler case.

Assume that $\La$ is  given by some analytic equation $\La\colon
\{f=0\}$ where $f\colon M \to \bc$ is transversely holomorphic.

We may write by hypothesis $\eta = (k+1)\,\frac{df}{f} + d\vr$ for
some transversely holomorphic function $\vr\colon M \to \bc$. Thus
$d\Om = \eta \wedge \Om$ implies $d(\frac{\Om}{f^{k+1}e^\vr}) = 0$.

Thus, $\omega = \frac{\Om}{f^{k+1}e^\vr}$ is closed transversely
meromorphic and defines $\fa$ in $M$ with polar set $(\omega)_\infty
= \La$. But this implies that $\Hol(\fa,\La)$ is abelian and
therefore abelian linearizable for it contains an attractor. Using
well-known techniques from \cite{Camacho-LinsNeto-Sad},
\cite{Scardua2}, \cite{Cerveau-Mattei} one can construct a
transversely meromorphic 1-form $\omega$, with order one polar set
$(\omega)_\infty = \La$, is a neighborhood $U$ of $\La$ in $M$, such
that $\fa|\_U$ is given by $\omega$: given any holonomy disc $D$
with $D \cap \La \ne \emptyset$ and any holomorphic coordinate $z
\in D$ that linearizes $\Hol(\fa,\La,D)$ we define $\omega|\_D :=
\frac{dz}{z}$ and extend it constant along the leaves of the
foliation.

Now we shall consider the general case. In this case there is an
open cover of a neighborhood $W(\Lambda)$ of $\Lambda$ in $M$ by
open sets $U_j\subset M$ such that on each $U_j$ there is a
transversely holomorphic function $f_j \colon U_j \to \mathbb C$,
reduced and such that $\Lambda \cap U_j =\{f_j=0\}$. Then from the
above considerations we can write $\eta\big|_{U_j} =
(k+1)\,\frac{df_j}{f_j} + d\vr_j$ for some transversely holomorphic
function $\vr_j\colon U_j \to \bc$. Then we have a closed
transversely meromorphic one-form
$\omega_j:=\frac{\Om}{f_j^{k+1}e^{\vr_j}}$ defining $\fa$ in $U_j$.
Now observe that for each non-empty intersection $U_i \cap U_j \ne
\emptyset$ we have $(k+1)\,\frac{df_j}{f_j} + d\vr_j=
(k+1)\,\frac{df_j}{f_j} + d\vr_i$ and therefore $f_j
e^{\frac{1}{k+1} \vr_j} = c_{ij} . f_i e^{\frac{1}{k+1} \vr_i}$ for
some constant $c_{ij}\in \mathbb C$. From the hypothesis
$H^1(\Lambda, \mathbb C^*)=0$ we obtain constants $\{c_j\}$ such
that if $U_i \cap U_j \ne \emptyset$ then we have $c_{ij}=c_i/c_j$.
Thus we may define $f\colon W(\Lambda) \to \mathbb C$ by setting
$f\big|_{U_j}=c_j f_j e^{\frac{1}{k+1} \vr_j}$. This is a
transversely holomorphic function such that $\Lambda=\{f=0\}$. The
proof then follows from the first case.
\end{proof}

\section{Applications: Proof of Theorem~\ref{Theorem:main}}

In this section we give some applications of the techniques we have
introduced. In this course we shall prove
Theorem~\ref{Theorem:main}. We start with  the following situation:
$\G$ is holomorphic foliation of codimension one and with
singularities on a complex manifold $X$ of dimension $n \ge 1$.
Suppose that $\G$ is transversely affine (as a singular holomorphic
foliation) (see \cite{Scardua1},\cite{Scardua2}) on $X\backslash\Ga$
for some analytic invariant codimension one subset $\Ga \subset X$.
Let $M \subset X$ be a real closed hypersurface that we suppose to
be transverse to $\G$ (see \cite{Scardua-Ito2}). Then the induced
foliation $\fa = \G|\_M$ is transversely holomorphic of codimension
one with a holomorphic affine transverse structure on
$M\backslash\La$, \, $\La := \Ga\cap M$ is a union of compact leaves
of $\fa$.

\begin{Lemma}
\label{Lemma:tubular}
In the above situation for $\fa$, $\La$,
$\G$, $\Ga$, $M$ and $X$ suppose that each leaf $L \subset \La$
contains an attractor on its holonomy group. Then $\G$ admits in a
small tubular neighborhood $W$ of $M$ in $X$, a $1$-form $\eta$
closed meromorphic in $W$ with simple poles $(\eta)_\infty = \Ga
\cap W$ and such that $d\Om = \eta\wedge\Om$ where $\Om$ is any
holomorphic $1$-form that defines $\G$ in $W$.
\end{Lemma}

\begin{proof} Given $\Om$ in a neighborhood of $M$ in $X$
as above, by Proposition~\ref{Proposition:extension} we may obtain a
1-form $\eta$ transversely meromorphic and closed
in $M$ such that $d(\Om|\_M) = \eta \wedge \Om|\_M$ and
$(\eta)_\infty = \La$ has order one. By transversality we may
extend $\eta$ to a small neighborhood $W$ of $M$ in $X$ as above
(see \cite{Camacho-Scardua}, \cite{Scardua2}).
\end{proof}

Under the hypothesis of Lemma\,\ref{Lemma:tubular} above if also
$X$ is a Stein manifold and $M = \po A$ is the smooth boundary of
a relatively compact open subset $A \subset X$ then by Levi's
Extension Theorem (\cite{Camacho-LinsNeto-Sad,Siu}) the 1-form $\eta$ extends to a meromorphic
(closed) 1-form $\tilde \eta$ on $W \cup \ov A$.

\noindent We proceed under the assumption
that  $M$ is
simply-connected.

\subsection{The irreducible case} As a first case assume that $ \tilde \La:=(\tilde\eta)_\infty$ is irreducible given by
an analytic equation say $\tilde\La = \{\tilde f = 0\}$ for some $\tilde
f\colon W \cup \ov A \to \bc$ holomorphic and reduced. Denote by
$\widetilde\fa$ the restriction of $\mathcal G$ to $W \cup \ov A$.

\begin{Claim}
\label{Claim:abelian}The holonomy group
$\Hol(\widetilde\fa,\widetilde L)$ of the leaf $\widetilde L=\tilde \Lambda \setminus \sing(\mathcal G)
\subseteq \Ga \cap (W \cup \ov A)$ of $\widetilde\fa$ is abelian
linearizable.
\end{Claim}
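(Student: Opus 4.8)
The plan is to recognize the Claim as an instance of Lemma~\ref{Lemma:logarithmic}, applied to the holomorphic foliation $\widetilde\fa$ on the complex manifold $W \cup \ov A$ with irreducible polar divisor $\tilde\La = \{\tilde f = 0\}$. Since $\tilde\La$ is irreducible and $\tilde\eta$ is closed with simple poles along it, the residue $a := \Res_{\tilde\La}\tilde\eta$ is a single constant. The leaf $\widetilde L$ contains $L = \tilde\La \cap M$ away from $\sing\G$, and by transversality of $M$ the hyperbolic attractor furnished on $\Hol(\fa,L)$ is realized as a hyperbolic element of $\Hol(\widetilde\fa,\widetilde L)$; hence it suffices to prove that $\Hol(\widetilde\fa,\widetilde L)$ is abelian, linearizability being then automatic. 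If it is not abelian, Proposition~\ref{Proposition:extension}(ii) forces $a = k+1$ for some integer $k \ge 1$ together with an analytic embedding $\Hol(\widetilde\fa,\widetilde L) \hookrightarrow \bh_k$.

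The decisive input is the vanishing $H^1(W \cup \ov A, \bc) = 0$, equivalently that every closed holomorphic $1$-form on $W \cup \ov A$ is exact; since $W$ retracts onto $M$ and $W \cup \ov A$ onto $\ov A$, this amounts to $H^1(\ov A, \bc) = 0$. I would obtain this by Poincar\'e--Lefschetz duality on the compact oriented $2n$-manifold-with-boundary $(\ov A, M)$ together with the Stein homotopy bound. Since $X$ is Stein of dimension $n$, Morse theory for a strictly plurisubharmonic exhaustion (Andreotti--Frankel, Milnor) gives $H_j(\ov A, \bc) = 0$ for $j > n$; as $n \ge 3$ both $H_{2n-1}(\ov A)$ and $H_{2n-2}(\ov A)$ vanish, so the long exact sequence of the pair yields
\[
H^1(\ov A, \bc) \cong H_{2n-1}(\ov A, M; \bc) \cong H_{2n-2}(M; \bc) \cong H^1(M; \bc) = 0,
\]
the last two isomorphisms being Poincar\'e duality on $M$ and the simple connectivity of $M$. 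It is exactly here that $n \ge 3$ enters, so that $2n-2 > n$; this is consistent with the exceptional role of $n = 2$ in Corollary~\ref{Corollary:sphere}.

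Granting this, the hypotheses of Lemma~\ref{Lemma:logarithmic} hold for $\widetilde\fa$ on $W \cup \ov A$: I may write $\tilde\eta = (k+1)\frac{d\tilde f}{\tilde f} + d\vr$ for a holomorphic $\vr$, whence $\omega := \Om/(\tilde f^{k+1} e^{\vr})$ is a closed transversely meromorphic $1$-form defining $\widetilde\fa$ in a neighborhood of $\tilde\La$. As in the proof of Lemma~\ref{Lemma:logarithmic}, this closed form is invariant under the holonomy pseudogroup, which contradicts the fact established in the proof of Proposition~\ref{Proposition:extension} that when $\Hol(\widetilde\fa,\widetilde L)$ embeds non-abelianly into $\bh_k$ the foliation cannot be defined near $\tilde\La$ by a closed transversely meromorphic $1$-form (its only projectively invariant transverse object is the vector field $z^{k+1}\frac{d}{dz}$, genuinely rescaled by the hyperbolic elements). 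This contradiction shows $\Hol(\widetilde\fa,\widetilde L)$ is abelian, and since it contains a hyperbolic attractor it is abelian linearizable, proving the Claim.

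The main obstacle is the topological vanishing $H^1(\ov A, \bc) = 0$: the duality computation is routine once one has the homology bound $H_j(\ov A, \bc) = 0$ for $j > n$, but justifying this bound for the given smooth domain $A$ --- rather than for a sublevel set of a plurisubharmonic exhaustion --- is the point that requires care, and it is where the Stein hypothesis and the assumption $n \ge 3$ are genuinely used.
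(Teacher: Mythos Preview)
Your overall strategy is sound and close to the paper's, but the route you take to exactness of closed holomorphic $1$-forms on $W\cup\ov A$ is different, and this is precisely where you leave a genuine gap. You want $H_j(\ov A,\bc)=0$ for $j>n$ and invoke Andreotti--Frankel, but that theorem concerns Stein manifolds themselves (or sublevel sets of a strictly plurisubharmonic exhaustion), not an arbitrary relatively compact smooth domain $A$ inside a Stein manifold. Nothing in the hypotheses forces $A$ to be pseudoconvex, so the Morse-theoretic bound need not hold for $\ov A$; you yourself flag this as ``the point that requires care,'' and it remains unresolved. Without it your duality computation, although correct once granted the vanishing, does not go through.

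The paper bypasses this topological issue entirely by working \emph{on $M$ first} and then using Levi's extension theorem. Since $M$ is simply connected, the closed holomorphic $1$-form $\tilde\vr:=\tilde\eta-a\,\frac{d\tilde f}{\tilde f}$ admits a holomorphic primitive $\tilde h$ on a collar neighborhood of $M$; because $X$ is Stein, Levi extension pushes $\tilde h$ across to all of $W\cup\ov A$. This is the substitute for $H^1(\ov A,\bc)=0$ and uses exactly the stated hypotheses.

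After normalizing to $\tilde\eta=a\,\frac{d\tilde f}{\tilde f}$, the paper also argues differently in the case $a=k+1\in\{2,3,\dots\}$: rather than your contradiction ``closed meromorphic defining form $\Rightarrow$ abelian holonomy'' (which is legitimate, being the content of the second case in the proof of Proposition~\ref{Proposition:extension}), it integrates the closed form $\widetilde\Om/\tilde f^{k+1}$ a second time (again via $\pi_1(M)=0$ and Levi) to produce a meromorphic first integral $\widetilde F=\widetilde G/\tilde f^k$, giving $\widetilde\Om=\tilde f\,d\widetilde G-k\widetilde G\,d\tilde f$ and exhibiting $\widetilde\fa$ as the pull-back of a linear foliation. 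Your contradiction is shorter; the paper's construction is more explicit. The case $a-1\notin\bn$ the paper handles by a direct appeal to \cite{Scardua1}, which amounts to your use of Proposition~\ref{Proposition:extension}(i).

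In summary: replace your duality argument by the paper's ``integrate on the simply-connected boundary $M$, then Levi-extend'' device, and your proof becomes correct; your contradiction endgame is then a valid (and slightly quicker) alternative to the paper's second integration.
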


\begin{proof} We denote by $\widetilde\Om$ and
$\tilde\eta$ the restriction of $\Om$  to $W \cup \ov A$. We have
$(\tilde\eta)_\infty = (\tilde f=0)$ of order one so for a suitable
constant $a \in \bc^*$ we have $\tilde\eta - a\,\frac{d\tilde
f}{\tilde f} = \tilde\vr$ is holomorphic and closed in $W \cup \ov
A$. Since $M$ is simply-connected (by hypothesis) we may obtain a
holomorphic function $\tilde h$ in a neighborhood of $M$ in $W \cup
\ov A$ such that $\tilde\vr = d\tilde h$ in this neighborhood. By
Levi's Extension Theorem \cite{Siu} $\tilde h$ extends holomorphic
to $W \cup\ov A$ and we have $\tilde \vr = d\tilde h$ on $W \cup \ov
A$.

\noindent Thus $\tilde\eta - a\,\frac{d\tilde f}{\tilde f} = d\tilde
h$ and therefore $ \tilde\eta = a\, \frac{d\tilde f}{\tilde f} +
\frac{d(e^{\tilde h})}{e^{\tilde h}} = a\, \frac{d(\tilde f \,e
\frac{\tilde h}{a})}{(\tilde f\,e^{\frac{\tilde h}{a}})}\cdot $ We
may therefore assume that $\tilde \eta = a\,\frac{d\tilde f}{\tilde
f}$ in $W \cup \ov A$.

\vglue .1in

\noindent{\bf $1^{\text{st}}$ case}.\quad $2 \le a = k+1$ for some
$k \in \bn$. In this case $ d\widetilde\Om = \tilde\eta \wedge
\widetilde\Om \,\Rightarrow\, d\left(\frac{\widetilde\Om}{\tilde
f^{k+1}}\right) = 0.$ Thus $\widetilde\fa$ is given by a closed
meromorphic 1-form on $W \cup \ov A$. But, since
$\frac{\widetilde\Om}{\tilde f^{k+1}}$ has polar set $\{\tilde
f=0\}$ and of order $k+1 \ge 2$ we must integrate it (as above for
$\tilde\vr$) and obtain $\frac{\widetilde\Om}{\tilde f^{k+1}} =
d\widetilde F$ where $\widetilde F\colon W \cup \ov A \to \ov\bc$ is
meromorphic with polar set $(\widetilde f)_\infty = \{\tilde f=0\}$
of order $k$. Thus, actually we have $\widetilde F =
\frac{\widetilde G}{\tilde f^k}$ for some holomorphic function
$\widetilde G\colon W \cup\ov A \to \bc$. This gives
$$
\widetilde \Om = f^{k+1}\,d\left(\frac{\widetilde G}{\tilde
f^k}\right) = \frac{\tilde f^{k+1}(\tilde f^k d\widetilde G -
\widetilde G k\tilde f^{k-1}d\tilde f^k)}{\tilde f^{2k}}
\Rightarrow \widetilde \Om = \tilde f\,d\widetilde G - k\widetilde
G d\tilde f.
$$
Since $\widetilde\Om$ is the pull-back of a linear 2-dimensional
foliation the holonomy of its analytic leaves, and in particular
of the leaf $\{\tilde f=0\}$, is abelian linearizable.

\vglue .1in

\noindent{\bf $2^{\text{nd}}$ case}.\quad $a - 1 \notin \bn =
\{1,2,3,\dots\}$.

\noindent In this case by \cite{Scardua1} Section 3 the holonomy
group of the $\{\tilde f=0\}$ is abelian linearizable: we may find
local coordinates $(x_j,y_j) \in U_j$ covering a neighborhood of
$\{\tilde f=0\}$ in $W \cup\ov A$ such that: $\{\tilde f=0\} \cap
U_j : \{y_j=0\}$; \, $\widetilde\fa|_{U_j} : dy_j=0$; \,
$\widetilde\Om(x_j,y_j) = g_jdy_j$\,, \, $\tilde\eta(x_j,y_j) =
\frac{dg_j}{g_j} + a\,\frac{dy_j}{y_j}\,\cdot$ Since in each
intersection $U_i \cap U_j \ne\emptyset$ of two such charts we have
$$
\begin{cases}
g_i\,dy_i = g_j\,dy_j\\
a\,\frac{dy_i}{y_i} + \frac{dg_i}{g_i} = a\,\frac{dy_j}{y_j} +
\frac{dg_j}{g_j}
\end{cases}
$$
we must have (for $a-1 \notin \bn$) that $y_i = c_{ij}\,y_j$ for
some locally constant $c_{ij}$ in $U_i \cap U_j$
(\cite{Scardua1}).

The claim is proved.

\end{proof}
Now we notice that according to \cite{Scardua-Ito1}, since $A$
cannot contain a compact analytic subset of dimension $\ge1$, the
singular set $\sing(\widetilde\fa)$ is a finite set of points.
Moreover, since $ n \geq 3$ also according to \cite{Malgrange},
$\widetilde\fa$ has a local holomorphic first integral in a
neighborhood of each such singular point. Also,  if we denote by
$\widetilde\La = \Ga \cap (W \cup \ov A)$ then
$\pi_1(\widetilde\La)$ is naturally isomorphic to $\pi_1(\widetilde
L)$.  Using now Claim\,\ref{Claim:abelian} and the construction in
\cite{Camacho-LinsNeto-Sad} \S 2 (see for instance the proof of
Proposition~1 therein) we can construct a closed meromorphic
one-form $\tilde\xi$ defining $\tilde \fa$ in a neighborhood of
$\widetilde\La$ in $W \cup \ov A$ such that $\tilde\xi$ defines
$\widetilde\fa$ outside the polar set $(\tilde\xi)_\infty =
\widetilde\La$ which is of order one. Notice that, as before, given
a non-singular poin $p \in \tilde \Lambda$ and a transverse disc
$\Sigma$ to $\tilde \Lambda $ at $p=\tilde \Lambda \cap \Sigma$, we
may choose a holomorphic coordinate $z\in \Sigma$ such that the
holonomy group $\Hol(\tilde \fa, \tilde L, \Sigma, p)$ is linearized
by the coordinate $z\colon (\Sigma,p) \to (\mathbb C,0)$. Then, we
have $\tilde \xi\big|_{\Sigma} = \frac{dz}{z}$. Now, the dynamics of
$\widetilde\fa$ near $M$ allows to extend $\tilde\xi$ to a
neighborhood of $M$: indeed the holonomy of any leaf of $\fa$ not
contained in $\La$ is abelian linearizable (a subgroup of
$\Aff(\bc)$ with a fixed point at $0 \in \bc$) therefore we may
write locally $\tilde\xi = \frac{d\tilde y}{\tilde y}$ and extend
the function $y$ to a ``multivalued'' holomorphic function constant
on the leaves of $\fa$. Since the only non trivial dynamics of
$\fa$ is concentrated on $\La$ we conclude that this extension is
coherent.
 Thus we may take
$\tilde\xi$ as $\frac{d\tilde y}{\tilde y}$ for such an extension
which gives the desired extension of $\tilde\xi$ (see
\cite{Brunellaspheres} for a similar argumentation on the
extension).

Finally,
$\tilde\xi$ extends by transversality to a neighborhood od $M$ in
$W \cup \ov A$ and by Levi's Extension Theorem it extends to $W
\cup \ov A$.

Now we have obtained a closed meromorphic 1-form $\tilde\xi$ in $W
\cup \ov A$ with simple poles so that $(\tilde\xi)_\infty = \{\tilde
f=0\}$. Therefore, again, we may integrate $\tilde\xi -
\al\,\frac{d\tilde f}{\tilde f} = \tilde\vr$ for a suitable $\al \in
\bc-\{0\}$ and obtain $\tilde\xi = \al\,\frac{d\widetilde
F}{\widetilde F}$ for some holomorphic function $\widetilde F\colon
W \cup \ov A \to \bc$.

\subsection{General case} Now if $(\eta)_\infty = \La$ has more than one irreducible
component then the same argumentation above may show the existence
of $\tilde\xi$ with order one polar set $(\tilde\xi)_\infty =
\widetilde\La$ and which writes $\tilde\xi = \sum\limits_{j=1}^r
\al_j\,\frac{d\tilde f_j}{\tilde f_j}$ for holomorphic functions
$\tilde f_j\colon W \cup \ov A \to \bc$ and complex numbers $\al_j
\in \bc-\{0\}$. Indeed, there are two problems that we shall deal
with. One is the possibility that different components have
different types of holonomy groups (some abelian, some non-abelian).
The other is that we may find a coherent extension to the closed
one-forms that we construct in a neighborhood of each component. In
order to assure the coherency of the extension one only has to
observe that in a neighborhood of a component $\La_j \subset \La$
minus $\La_j$ itself there exists (up to multiplication by
constants) a unique closed holomorphic 1-form that defines
$\widetilde\fa$; this is because the quotient of two such closed
1-forms is a meromorphic first integral for $\widetilde\fa$. On the
other hand, since the holonomy group of $\La_j$ contains an
attractor, given any neighborhood $V_j$  of $\La_j$ in $M$, there
exist no meromorphic first integral for $\fa$ in
$V_j\backslash\La_j$ except constants.

 Thus we have nearly proved the following:

\begin{Theorem}
\label{Theorem:application} Let $\G$ be a codimension one
holomorphic foliation with singularities in a complex Stein
manifold $X^n$ of dimension $n \ge 3$. Let $A \subset X$ be a
relatively compact open subset with simply-connected smooth
boundary $M = \po A$ transverse to $\G$. Suppose that the induce
transversely holomorphic foliation $\fa = \G|\_M$ is
holomorphically transversely affine in $M\backslash \La$ for some
invariant subset $\La = \Ga \cap M$ where $\Ga \subset X$ is
analytic and invariant by $\G$. Also assume that each leaf of
$\fa$\,\, $L \subset \La$ contains an attractor on its holonomy
group.  Then $\G$ is given in a neighborhood of $\ov A$ in $X$ by
a closed meromorphic one form of logarithmic type.
\end{Theorem}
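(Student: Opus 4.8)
The plan is to assemble Theorem~\ref{Theorem:application} from the machinery developed in the preceding sections, treating the irreducible and reducible cases in turn. First I would invoke Lemma~\ref{Lemma:tubular} to produce, in a tubular neighborhood $W$ of $M$ in $X$, a closed meromorphic $1$-form $\eta$ with simple poles $(\eta)_\infty = \Ga \cap W$ satisfying $d\Om = \eta \wedge \Om$ for any holomorphic $\Om$ defining $\G$ near $M$. Because $X$ is Stein and $M = \po A$ bounds the relatively compact set $A$, Levi's Extension Theorem extends $\eta$ to a closed meromorphic $1$-form $\tilde\eta$ on $W \cup \ov A$, so the residue data of $\G$ along $\Ga$ becomes globally available on a neighborhood of $\ov A$. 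This reduces the whole problem to upgrading $\eta$ to a genuine closed logarithmic form $\tilde\xi$ with $(\tilde\xi)_\infty = \widetilde\La := \Ga \cap (W \cup \ov A)$ of order one.

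Next I would run the irreducible case exactly as developed above. Writing $\widetilde\La = \{\tilde f = 0\}$ and absorbing the holomorphic part of $\tilde\eta$ by integrating $\tilde\eta - a\,\frac{d\tilde f}{\tilde f} = d\tilde h$ (using simple-connectivity of $M$ and a second application of Levi's theorem to extend $\tilde h$), I may normalize to $\tilde\eta = a\,\frac{d\tilde f}{\tilde f}$. I then split on the residue $a$: when $a = k+1 \in \{2,3,\dots\}$ the form $\widetilde\Om/\tilde f^{k+1}$ is closed meromorphic of polar order $\ge 2$, so it integrates to a meromorphic first integral, which forces $\widetilde\Om = \tilde f\,d\widetilde G - k\widetilde G\,d\tilde f$, exhibiting $\G$ as the pullback of a linear foliation and making the holonomy of $\{\tilde f=0\}$ abelian linearizable; when $a - 1 \notin \bn$ the direct argument of \cite{Scardua1} gives abelian linearizable holonomy as well. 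In either subcase Claim~\ref{Claim:abelian} holds, and then—using that $\sing(\widetilde\fa)$ is a finite point set by \cite{Scardua-Ito1}, that these singularities have local holomorphic first integrals by \cite{Malgrange} since $n \ge 3$, and that $\pi_1(\widetilde\La) \cong \pi_1(\widetilde L)$—the construction of \cite{Camacho-LinsNeto-Sad} produces a closed meromorphic $\tilde\xi$ with simple poles defining $\widetilde\fa$ near $\widetilde\La$, normalized to $\tilde\xi|_\Sigma = \frac{dz}{z}$ in linearizing coordinates.

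For the reducible case $(\eta)_\infty = \La = \bigcup_j \La_j$, I would run the same construction component by component to get, near each $\La_j$, a closed meromorphic $1$-form $\al_j\,\frac{d\tilde f_j}{\tilde f_j}$, and then glue these into a single $\tilde\xi = \sum_{j=1}^r \al_j\,\frac{d\tilde f_j}{\tilde f_j}$. The two points to verify here are that the gluing is coherent across components of possibly different holonomy type, and that the resulting form extends past $M$. Both follow from the uniqueness principle already noted in the excerpt: in a punctured neighborhood $V_j \setminus \La_j$ the closed holomorphic $1$-form defining $\widetilde\fa$ is unique up to a constant multiple, since the ratio of two such forms would be a meromorphic first integral, and the attractor hypothesis on $\Hol(\fa,\La_j)$ rules out nonconstant first integrals in $V_j \setminus \La_j$. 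This rigidity forces the locally constructed logarithmic forms to agree up to scale on overlaps, so they patch.

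I expect the main obstacle to be precisely this coherent-extension step, together with propagating $\tilde\xi$ from a neighborhood of $\widetilde\La$ out to a neighborhood of $M$ itself. The leaves meeting $M$ but not contained in $\La$ have abelian linearizable holonomy sitting inside $\Aff(\bc)$ with fixed point $0$, so locally $\tilde\xi = \frac{d\tilde y}{\tilde y}$ and one extends $\tilde y$ as a multivalued holomorphic function constant on the leaves of $\fa$; because all nontrivial dynamics of $\fa$ is concentrated on $\La$, this extension is single-valued up to the ambiguity already carried by $\tilde\xi$ and hence coherent. A final transversality extension off $M$ followed by one more application of Levi's Extension Theorem pushes $\tilde\xi$ to all of $W \cup \ov A$, and integrating $\tilde\xi - \al\,\frac{d\tilde f}{\tilde f} = d\tilde\vr$ once more presents $\G$ as given by a closed meromorphic $1$-form of logarithmic type in a neighborhood of $\ov A$, completing the proof.
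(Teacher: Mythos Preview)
Your plan tracks the paper's argument closely through the irreducible case and through the two ``pure'' reducible subcases (all residues $a_j=k_j+1\in\{2,3,\dots\}$, or all $a_j-1\notin\bn$). The gap is in the mixed case, where some components have integer residue $\ge 2$ and others do not. Your sentence ``run the same construction component by component'' presupposes that Claim~\ref{Claim:abelian} localizes to a single component $\La_1$ with $a_1=k_1+1\ge2$, but the integer-residue half of that claim is a \emph{global} argument: it needs $\tilde\eta=(k_1+1)\frac{d\tilde f_1}{\tilde f_1}$ on all of $W\cup\ov A$ so that $\widetilde\Om/\tilde f_1^{k_1+1}$ is closed and can be integrated to a meromorphic first integral. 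In the mixed case one only has $\tilde\eta-(k_1+1)\frac{d\tilde f_1}{\tilde f_1}$ closed holomorphic near $\La_1$, and the remaining terms $\sum_{j\ne1}a_j\frac{d\tilde f_j}{\tilde f_j}$ with non-integral $a_j$ obstruct exactness there, so you cannot produce a closed form defining $\widetilde\fa$ in a neighborhood of $\La_1$ by this route. Consequently you have no argument that $\Hol(\fa,\La_1)$ is abelian, and hence no logarithmic $\tilde\xi$ to glue in.

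The paper closes this gap by a different device. It first builds $\tilde\xi$ near the components with $a_j-1\notin\bn$ (where the \cite{Scardua1} argument \emph{is} local) and propagates it through $M\setminus\La$ via the dynamics, so that $\tilde\xi$ is defined on $V_1\setminus\La_1$. Writing $\widetilde\Om=h_1\tilde\xi$ there gives a second form $\frac{dh_1}{h_1}$ satisfying $d\widetilde\Om=\frac{dh_1}{h_1}\wedge\widetilde\Om$. If $\Hol(\fa,\La_1)$ were non-abelian, any two such forms would coincide (their difference would be a closed holomorphic $1$-form defining $\fa$ on $V_1\setminus\La_1$, which non-abelian holonomy forbids), forcing $\tilde\eta=\frac{dh_1}{h_1}$ and hence $\Res_{\La_1}\tilde\eta=1$, contradicting $a_1\ge2$. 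This contradiction is what yields abelian linearizable holonomy for the integer-residue components in the mixed case; you should insert this step in place of ``run the same construction component by component.'' Note also that your uniqueness principle (ratio of two closed defining forms is a first integral) is used in the paper for \emph{coherence of the extension}, not as a substitute for this residue contradiction.
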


\begin{proof} We write $\La = \La_1 \cup \dots \cup \La_r$
in irreducible components $\La_j$ with $\La_i \cap \La_j =
\emptyset$, \, $\forall\, i\ne j$. Notice that the components are disjoint because $\fa$ is non-singular. If $r=1$ then we are done owing to
the above discussion. This is also the case if $r\ge2$ and the
$a_j = \Res_{\La_j}\,\eta$ satisfies either of the following
conditions: \vglue .1in

\noindent (1)\quad $2 \le a_j = k_j+1$, \quad $k_j \in \bn$, \quad
$\forall\, j \in \{1,\dots,r\}$. In this case $\G$ is given by a
closed rational 1-form $\frac{\widetilde\Om}{f_1^{k_1+1}\dots
f_r^{k_r+1}}$ and we proceed as above.

\vglue .1in

\noindent (2)\quad $a_j-1 \notin \bn$, \quad $\forall\,j \in
\{1,\dots,r\}$. In this case each $\La_j$ has abelian linearizable
holonomy and we may construct a simple poles closed meromorphic
1-form $\tilde\xi$ defining $\widetilde\fa$ in a neighborhood $W
\cup \ov A$ of $\ov A$ in $X$.

\noindent Therefore, it remains to consider the case $2 \le a_j =
k_j+1$, \quad $k_j \in \bn$ for $j \in \{1,\dots,j_0\}$ and $a_j-1
\notin \bn$, \quad $\forall\, j \in \{j_0+1,\dots,r\} \ne
\emptyset$.

\noindent Beginning with some component $\La_j$ with $j \in
\{j_0+1,\dots,r\}$ we construct a closed 1-form $\tilde\xi$ in a
neighborhood of $\La_j$ and extend it to a neighborhood of each
$\La_j$ with $j \in \{j_0+1,\dots,r\}$. It remains to extend
$\tilde\xi$ to a neighborhood of the $\La_j$ for which $j \in
\{1,\dots,j_0\}$. Suppose by contradiction that $\Hol(\fa,\La_1)$
is not abelian. In a neighborhood $V_1$ of $\La_1$ minus $\La_1$
there exists, up to multiplication by constant, a unique 1-form
$\eta_1$ such that $d\widetilde\Om = \eta_1 \wedge \widetilde\Om$:
given two such 1-form $\eta_1$ and $\eta_1'$ we have
$\eta_1-\eta_1' = h\widetilde\Om$ which is a closed 1-form in
$V_1\backslash\La_1$ defining $\fa$ if non trivial. Since the
holonomy of the leaf $\La_1$ of $\fa$ is non abelian such a closed
1-form cannot be identically zero and therefore $\eta = \eta_1$ in
$V_1\backslash\La_1$\,.

Thus, given $h_1$ in $V_1\backslash\La_1$ by $\widetilde\Om =
h_1\tilde\xi$ we have $d\widetilde\Om = \frac{dh_1}{h_1} \wedge
(h_1\tilde\xi) = \frac{dh_1}{h_1} \wedge \widetilde\Om$ and
therefore $\frac{dh_1}{h_1} = \tilde\eta$ in $V_1\backslash\La_!$\,.
This implies that, under the hypothesis that $\Hol(\fa,\La_1$ is not
abelian, we must have $\Res_{\La_1}\,\eta = 1$, contradiction.

Thus we have obtained, as in Claim\,\ref{Claim:abelian}, that
every component $\La_j \subset \La$ has abelian linearizable
holonomy group. As in the paragraph preceding the statement of
Theorem\,\ref{Theorem:application} above we may obtain a global
1-form $\tilde\xi$ in a neighborhood of $\ov A$ in $X$.

\end{proof}

Theorem~\ref{Theorem:main} is now a straightforward consequence of the above.

\begin{Remark}
\label{Remark:moregeneral}
{\rm We conclude from the proof above, that Theorem~\ref{Theorem:main} also holds if we replace the hypothesis
that $M$ is simply-connected by assuming that
 $H^1(M,\Om_\fa^\O) = 0$.
 It also holds for dimension $n=2$ if we assume that singularities
 of the foliation $\mathcal G$ in $\Gamma \cap A$ are the so called
 {\it first order singularities} (cf. \cite{Camacho-LinsNeto-Sad})
 meaning that they are non-dicritical generalized curves.
}
\end{Remark}

\bibliographystyle{amsalpha}

\end{document}